\documentclass[11pt]{article}
\usepackage{amssymb,latexsym,amsmath,amsthm,times,wasysym}
\usepackage{curves,dcolumn,siunitx,booktabs,xcolor,times}
\usepackage{mathtools,amsfonts,color,rotating,bm}
\usepackage{indentfirst,enumerate, verbatim}   
\usepackage{mathrsfs}   
\usepackage{yhmath}     
\usepackage{graphicx}   
\usepackage{curves,pb-diagram,lineno}
\usepackage[mathscr]{eucal}

\newtheorem{theorem}{Theorem}[section]
\newtheorem{lemma}[theorem]{Lemma}
\newtheorem{corollary}[theorem]{Corollary}
\newtheorem{prop-def}{Proposition-Definition}[section]

\newtheorem{defi}[theorem]{Definition}

\newcommand{\q}{\quad}
\newcommand{\qq}{\qquad}

\newcommand{\ges}{\geqslant}
\newcommand{\les}{\leqslant}
\newcommand{\mb}{\mathbf}

\DeclareMathOperator{\id}{id}
\DeclareMathOperator{\RSp}{RSp}
\DeclareMathOperator{\R}{R}

\begin{document}

\title{Enumeration of Upper Triangular Symplectic\\ Renner Monoids
}
\date{}
\maketitle

\vspace{ -1.6cm}
\begin{center}
\author Reginald Koo{$^{1}$\footnotetext[1]{\em E-mail:
regk@usca.edu}} ~\mbox{and}~ Zhenheng Li{$^{2}$\footnotetext[2]{\em E-mail:
zhenhengl@usca.edu\quad Corresponding Author} }

\vskip 2mm
Department of Computer Science, Engineering and Mathematics\\
University of South Carolina Aiken\\
Aiken, SC 29801, USA
\end{center}

\vspace{ 0.3cm}
\begin{abstract}
In this paper, we compute the orders of upper triangular symplectic Renner monoids using admissible sets. We obtain both recursive and closed-form formulas for these orders. We then derive their exponential generating functions and establish connections between these functions and the Laguerre polynomials.

\vspace{ 0.3cm} \noindent {\bf Keywords:} Symplectic injective partial transformation, symplectic bijective transformation, symplectic Renner monoid, upper triangular symplectic Renner monoid.
\end{abstract}

\baselineskip 15pt
\parskip 6pt
\section{Introduction}

The set $\R_m$ of all injective partial transformations of ${\bf m}=\{1,2,\dots, m\}$ is a monoid (semigroup with an identity) under composition of maps, with identity element $\id_{\bf m}$. An {\em injective partial transformation} of {\bf m} is a one-to-one map of a subset (domain) of {\bf m} onto a subset (range) of {\bf m}. 

Let $M^*_m$ denote the set of all $m\times m$ matrices $A=[a_{ij}]$ such that each $a_{ij}=0$ or $1$  for all $i,j$, and such that each row and each column contains at most one $1$. 
Then, $M_m^*$ is a monoid under matrix multiplication with identity $I_m$. There is a monoid isomorphism  between $\R_m$ and $ M_m^*$, whereby $a_{ij}=1$ if{f} $f(j)=i$. We identify $\R_m$ with $M^*_m$ throughout. 

For instance, let 
${\bf m}=\{1, 2, 3, 4\}$. The matrix
$$
                        \begin{pmatrix}
                        0&0&1&0\\
                        1&0&0&0\\
                        0&0&0&0\\
                        0&0&0&1
                        \end{pmatrix}\in M^*_4
$$
corresponds to the injective partial transformation $f\in \R_4$ with domain $D(f)=\{1, 3, 4\}$ given by the column indices of the nonzero entries, and range $R(f)=\{1, 2, 4\}$ given by the row indices of the nonzero entries. In map notation
\[
    1\mapsto 2,\qq 2\notin D(f),\qq 3\mapsto 1,\qq4\mapsto 4.
\]

In combinatorics, these matrices can be viewed as placements of any number of non-attacking rooks on an $m\times m$ chessboard, and $\R_m$ is consequently called the {\em rook monoid}. We use the terms {\em injective partial transformation} and {\em rook matrix} interchangeably, according to the context. The submonoids $B_m$ of $\R_m$ consisting of upper triangular injective partial transformations of ${\bf m}$ are called {\em upper triangular rook monoids}.

The orders $$|\R_m| = \sum_{r=0}^m {\binom mr}^2 r!$$
are closely related to the Laguerre polynomials, which have been described as one of the ``prettiest gems'' of combinatorial theory \cite{A, BRR}. The orders of $B_m$ are the Stirling numbers of the second kind, which arise naturally in the study of set partitions. The exponential generating functions of both $|\R_m|$ and $|B_m|$ are given in \cite{BRR}.

It is sometimes useful to regard $\R_m$, where $m\ges1$, as the Renner monoid of the general linear algebraic monoid $M_m(K)$ in the theory of linear algebraic monoids \cite{ZL, PU1, R1, SL1}, where $K$ is an algebraically closed field.

The symplectic Renner monoids $\RSp_n$, where $n=m/2$ and $m$ is even, form another infinite family of rook matrices of size $2n\times2n$, consisting of symplectic injective partial transformations (Definition \ref{symplecticTransformation}). A systematic study of their structures and cardinalities using the symplectic Weyl group and admissible subsets (Definition \ref{admissibleDefinition}) can be found in \cite{CLSW, ZHL}.

The orders $|\RSp_n|$ and the exponential generating functions of the symplectic Renner monoids were studied in \cite{LLC}. In particular,
$$
  |\RSp_n|   = \sum_{r=0}^n 4^r{\binom nr}^2 r! + 2^n n!.
$$
These orders are also closely connected to the Laguerre polynomials \cite{LLC}.

Let $\mathbb B_n$ denote the subset of $\RSp_n$ consisting of upper triangular symplectic injective partial transformations. These form submonoids of $\RSp_n$, called the {\em upper triangular symplectic Renner monoids}. For $0\leq r\leq2n$, define
\begin{align*}
\mathbb B_{n,\,r}&=\{ f\in \mathbb B_n ~\big|~|D(f)|=r\},\q 0\les r\les 2n.
\end{align*}
Thus, $\mathbb B_{n,r}$ consists of the elements of $\mathbb B_n$ whose domains have cardinality $r$.

We investigate the orders and exponential generating functions of $\mathbb B_n$ and $\mathbb B_{n,r}$. In particular, we derive recursive formulas for calculating $\mathbb B_n$ and $\mathbb B_{n,r}$ in Theorem \ref{mainTheorem}. These formulas are then used to obtain the exponential generating functions for $b_n$ and $b_{n,\,n-r}$ in Theorems \ref{generatingBn} and \ref{generatingBnr}, respectively. We further establish connections between these generating functions and the Laguerre polynomials in Corollary \ref{coro:LaguerrePolynomial}. We then provide a recurrence relation of order three for the sequence $b_n$ in Theorem \ref{th:pureRecursion}. Closed formulas for $b_n$ and $b_{n,r}$ are given in Theorems \ref{formula:bn} and \ref{formula:bnr}, respectively.

The remainder of the paper is organized as follows. Section 2 introduces the concepts and techniques involving admissible sets and upper triangular symplectic Renner monoids. Section 3 develops recursive formulas for the orders of these monoids and gives numerical values for the first seven cases. Section 4 presents the exponential generating functions and closed formulas for $b_n$ and $b_{n,r}$. It also provides a recurrence relation for the sequence $b_n$ involving the preceding three terms, $b_{n-1}$, $b_{n-2}$, and $b_{n-3}$. Finally, Section 5 presents a conjecture that may be useful for further study of the generating functions of $b_n$.

\section{Preliminaries }
\vspace{-2mm}
Let ${\bf m}=\{1, 2, ..., m\}$ with $m\ges 1$. Define an involution
$\theta$ of ${\bf m}$ by 
\[
    \theta(i)= m+1-i,\quad i\in {\bf m}.
\]
Then $\theta\in\mathbf{\mathcal {\R}}_m$. 
The involution $\theta$ in $\mb{m=2n}$ can be
interpreted geometrically as reflection across the `median line', illustrated here with $n=4$:
 \begin{center}
 \begin{small}
\begin{tabular}{|c|c|c|c||c|c|c|c|}
\hline
1&2&3&4&5&6&7&8\\\hline
\end{tabular}
\end{small}
\end{center}
\begin{defi}{\label{admissibleDefinition}} 
A subset $I$ of $\mb m$ is said to be {\bf admissible} if $I=\mb m$ or if $\theta(I)\cap I=\varnothing$. 
An admissible subset not equal to $\mb m$ is called proper admissible.
\end{defi}
\vspace{-2mm}
If $m=2n$ with $n\ges 1$, we use $\mathscr A_n$ to denote the set of all admissible subsets of $\mb {2n}$. Note that the empty set $\varnothing$ is admissible.
If ${\bf m}=\{1, 2\}$, then 
$$
    \mathscr A_1 = \{\varnothing, ~\{1\}, ~\{2\},~\{1, 2\} \}.
$$ 
\vspace{-2mm}
If ${\bf m}=\{1, 2, 3, 4\}$, then 
        $$
        \mathscr A_2 =\{\varnothing, ~\{1\}, ~\{2\}, ~\{3\}, ~\{4\}, ~\{1, 2\}, ~\{1, 3\}, ~\{2, 4\}, ~\{3, 4\},~\{1, 2, 3, 4\}\}.
        $$
The two lemmas below, presented without proof, state some simple  properties of admissible sets.
\begin{lemma}{\label{lemma:intersection}}
Let ${\bf m}=\{1, 2, ..., m\}$ with $m\ges 1$. Then the intersection of two admissible sets is admissible, and each subset of a proper admissible set is admissible.
\end{lemma}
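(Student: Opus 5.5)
The plan is to argue directly from Definition \ref{admissibleDefinition}, using only two facts about the involution $\theta$: it is a bijection of $\mathbf m$, so $\theta(A\cap B)\subseteq\theta(A)\cap\theta(B)$ and $A\subseteq B$ implies $\theta(A)\subseteq\theta(B)$. Everything then reduces to case analysis on whether the sets involved equal $\mathbf m$.

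I would prove the second assertion first, since the first follows from it. Let $J$ be a proper admissible set, so that $\theta(J)\cap J=\varnothing$, and let $I\subseteq J$. Then $\theta(I)\subseteq\theta(J)$, and hence
\[
\theta(I)\cap I\subseteq \theta(J)\cap J=\varnothing .
\]
Thus $I$ is admissible. It is in fact proper admissible unless $I=\mathbf m$. That can only happen when $J=\mathbf m$, which is excluded, or when $\mathbf m=\varnothing$, which is excluded by $m\ges 1$.

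For the intersection, let $I$ and $J$ be admissible. If both equal $\mathbf m$, then $I\cap J=\mathbf m$, which is admissible. Otherwise one of them, say $J$, is proper admissible. Then $I\cap J\subseteq J$, and the second assertion just proved shows that $I\cap J$ is admissible.

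No step is a genuine obstacle. The only point needing care is the special case $I=\mathbf m$ in the definition, since $\mathbf m$ itself need not satisfy $\theta(\mathbf m)\cap\mathbf m=\varnothing$. This is why the subset statement is restricted to proper admissible sets, and why the intersection argument splits into the two cases above.
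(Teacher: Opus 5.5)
Your argument is correct. The paper states this lemma without proof, and your check is exactly the routine verification it leaves to the reader: monotonicity of $\theta$ gives $\theta(I)\cap I\subseteq\theta(J)\cap J=\varnothing$ for $I\subseteq J$, and the intersection claim then reduces to the subset claim by splitting on whether both sets equal $\mathbf m$. The aside about $\mathbf m=\varnothing$ is unnecessary, since $I\subseteq J\subsetneq\mathbf m$ already forces $I\neq\mathbf m$.
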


\begin{lemma}{\label{basicFacts}}
Let $m$ be even and $i,j\in \mb{m}=\{1,2,\dots, m\}$. We have
 \begin{enumerate}[\upshape a)]
      \vspace{-2mm}
 \item $\theta(i)\ne i$ and hence the set $\{i\}$ is admissible. 
      \vspace{-2mm}
 \item If $j\ne \theta (i)$, then $ \{i,j\}$ is admissible.
     \vspace{-2mm}
 \item The set $\{i,\theta(i)\}$ is not admissible.
      \vspace{-2mm}
 \item If $I$ is a proper admissible set, then $|I|\les m/2$.
 \end{enumerate}
\end{lemma}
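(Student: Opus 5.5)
All four parts follow directly from the definition of $\theta$ and Definition \ref{admissibleDefinition}, using the fact that $\theta$ is an involution, so $\theta(i)=j$ if and only if $i=\theta(j)$. Write $m=2n$ with $n\ges 1$. Since the parity of $m$ is what makes everything work, I would use it in part a) and then reduce b) and c) to the definition. Part d) is a pairing argument.

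For a), suppose $\theta(i)=i$. Then $m+1-i=i$, so $m+1=2i$. This is impossible because $m+1$ is odd. Hence $\theta(\{i\})\cap\{i\}=\{\theta(i)\}\cap\{i\}=\varnothing$. Also $\{i\}\ne\mb m$, since $m\ges 2$. So $\{i\}$ is admissible. For b), if $i=j$ we are done by a), so assume $i\ne j$. Then $\theta(\{i,j\})=\{\theta(i),\theta(j)\}$. Now $\theta(i)\ne i$ by a), and $\theta(i)\ne j$ by hypothesis. Likewise $\theta(j)\ne j$ by a), and $\theta(j)\ne i$, since otherwise applying $\theta$ would give $j=\theta(i)$. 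Hence the intersection with $\{i,j\}$ is empty. If $\{i,j\}=\mb m$ (the case $m=2$), the set is admissible trivially, so either way $\{i,j\}$ is admissible. For c), put $I=\{i,\theta(i)\}$. Then $\theta(I)=I\ne\varnothing$, so $\theta(I)\cap I\neq\varnothing$. It remains to check $I\ne\mb m$; this fails only when $m=2$, where $I=\{1,2\}=\mb 2$ is admissible, as the listed $\mathscr A_1$ shows. So the claim c) as stated really requires $m\ges 4$ (or the reading that $I$ is not proper admissible). I would state this caveat explicitly, proving c) for $m\ges4$ and noting the exception $m=2$.

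For d), partition $\mb m$ into the $n$ pairs $\{k,\theta(k)\}$ for $1\les k\les n$. These are distinct two-element sets by a), and $\theta$ maps $\{1,\dots,n\}$ bijectively onto $\{n+1,\dots,2n\}$. If $I$ is proper admissible, then $\theta(I)\cap I=\varnothing$, so $I$ cannot contain both elements of any pair. Hence $|I|\les n=m/2$. The only step that needs care is the edge case in c), so I would flag the $m=2$ exception there.
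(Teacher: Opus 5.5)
Your proof is correct. The paper states this lemma without proof, and your direct verification from the definitions of $\theta$ and of admissibility is the intended argument; your pairing argument for d) amounts to noting that $I$ and $\theta(I)$ are disjoint subsets of $\mathbf{m}$ of equal size.

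Your caveat on c) is also right. For $m=2$ the set $\{1,\theta(1)\}=\{1,2\}=\mathbf{2}$ is admissible by definition, as the paper's own list of $\mathscr A_1$ shows, so c) needs $m\geq 4$ (or must be read as ``not proper admissible''). This is harmless where c) is invoked, in the proof of Lemma \ref{SymWeylGroup}: for $n=1$ every element of $S_2$ commutes with $\theta$, so that argument is vacuous.
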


The set $\mathscr A_n^\times = \mathscr A_n\setminus\{\mb {2n}\}$ consisting of all proper admissible subsets of $\mb {2n}$ is useful. 
The following lemma will be used in Section \ref{mainSection}.
\begin{lemma}\label{tau-lemma}
Let $n\ges 2$, let $\tau\colon\{2,\dots, 2n-1\}\to\{1,\dots, 2n-2\}$ be the map defined by $\tau(k)=k-1$, and let $\mathscr B_n=\{I\in\mathscr A_{n}^\times\mid I\cap \{1,2n\}=\varnothing\}$.
\begin{enumerate}[\rm a)]
     \vspace{-2mm}
  \item If $I\subseteq\{2,\dots, 2n-1\}$, then $I\in \mathscr B_{n} $ if and only if $\tau (I)\in \mathscr A_{n-1}^\times$.
           \vspace{-2mm}
  \item If $K\in \mathscr A_{n-1}^\times $ then both $\tau^{-1}(K)\cup\{1\}$ and $\tau^{-1}(K)\cup\{2n\}$ are in $\mathscr A_n^\times$.
\end{enumerate}
\end{lemma}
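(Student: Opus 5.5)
The whole argument rests on one observation about how $\tau$ interacts with the involutions on the two index sets. Write $\theta_n(i)=2n+1-i$ on $\mathbf{2n}$ and $\theta_{n-1}(i)=2n-1-i$ on $\mathbf{2n-2}$. For $k\in\{2,\dots,2n-1\}$ we have
\[
\tau(\theta_n(k))=2n-k=\theta_{n-1}(k-1)=\theta_{n-1}(\tau(k)),
\]
so $\tau\circ\theta_n=\theta_{n-1}\circ\tau$ on $\{2,\dots,2n-1\}$. Since $\tau$ is a bijection onto $\{1,\dots,2n-2\}$, it follows that for every $I\subseteq\{2,\dots,2n-1\}$,
\[
\tau(\theta_n(I)\cap I)=\theta_{n-1}(\tau(I))\cap\tau(I).
\]
Both parts of the lemma will be deduced from this identity.

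For part a), let $I\subseteq\{2,\dots,2n-1\}$. Then $I\cap\{1,2n\}=\varnothing$ holds automatically, so $I\in\mathscr B_n$ if and only if $I$ is proper admissible in $\mathbf{2n}$. Now $I\neq\mathbf{2n}$ is automatic because $1\notin I$. Likewise $\tau(I)=\mathbf{2n-2}$ would force $I=\{2,\dots,2n-1\}$, which contains $2$ and $\theta_n(2)=2n-1$; by Lemma \ref{basicFacts} c) such a set is not admissible. Hence in both directions the word ``proper'' takes care of itself, and admissibility reduces to the condition $\theta(I)\cap I=\varnothing$ in $\mathbf{2n}$ versus $\theta(\tau(I))\cap\tau(I)=\varnothing$ in $\mathbf{2n-2}$. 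By the displayed identity and the injectivity of $\tau$, these two conditions are equivalent.

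For part b), take $K\in\mathscr A_{n-1}^\times$ and set $I=\tau^{-1}(K)\subseteq\{2,\dots,2n-1\}$. By part a), $I$ is admissible and $\theta_n(I)\cap I=\varnothing$. Let $J=I\cup\{1\}$. Then
\[
\theta_n(J)\cap J=(\theta_n(I)\cup\{2n\})\cap(I\cup\{1\}).
\]
Expanding, $\theta_n(I)\cap I=\varnothing$ by admissibility of $I$. Also $2n\notin I$ and $2n\neq 1$. Finally $1\notin\theta_n(I)$, since that would require $2n\in I$. Hence $\theta_n(J)\cap J=\varnothing$, and $J\neq\mathbf{2n}$ because $J$ misses $2n$, so $J\in\mathscr A_n^\times$. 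The case $I\cup\{2n\}$ is symmetric, with the roles of $1$ and $2n$ exchanged.

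No step here is a genuine obstacle. The only place needing care is the properness check in part a), specifically ruling out $\tau(I)=\mathbf{2n-2}$, and this is handled by Lemma \ref{basicFacts} c) as above.
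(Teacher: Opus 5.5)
Your proof is correct and follows the paper's approach. Both arguments rest on the intertwining relation $\theta(i)-1=\theta'(i-1)$ for $2\le i\le 2n-1$ in part a), and on the observation that $\theta(1)=2n$ and $\theta(2n)=1$ do not lie in $\tau^{-1}(K)$ in part b). You handle both directions of a) at once through the set identity, and you explicitly rule out $\tau(I)=\mathbf{2n-2}$, a properness point the paper leaves implicit.
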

\begin{proof}
We prove a). Let $\theta$ be the involution of $\mb{2n}$, and let $\theta'$ be the involution of $\mb{2n-2}$, so that 
$\theta(i)=2n+1-i$   $(1\les i\les 2n)$, and $\theta'(k)=2n-1-k$ $(1\les k\les 2n-2)$. It is easy to verify that $\theta(i)-1=\theta'(i-1)$ for $2\les i\les 2n-1$. Let $I$ be a non-empty set in $\mathscr B_n$, and let $J=\tau(I)$.
Suppose to the contrary that $J$ is not admissible. Then there exist $i$, $j\in I$ such that $\theta'(i-1)=j-1$. It follows that
$\theta (i)=j$ contradicting the fact that $I$ is admissible. The proof
of the converse is similar.

We now prove b). If $K\in \mathscr A^\times_{n-1}$ then from a) it follows that $\tau^{-1}(K)\subseteq\{2,\dots, 2n-1\}$ is in $ \mathscr A_{n}^\times$. Since $\theta(1)$ and $\theta(2n)$ are both not in $\tau^{-1}(K)$ it follows that
$\tau^{-1}(K)\cup\{1\}$ and $\tau^{-1}(K)\cup\{2n\}$ are in $\mathscr A_n^\times$.
\end{proof}

Recall that $\R_m$ is the monoid of all injective partial transformations of ${\bf m} = \{1, 2, . . . , m\}$. The domain and range of $f\in \R_m$ are denoted by $D(f)$ and $R(f)$, respectively. 
\begin{defi}{\label{properInParTrans}}
Let $f$ be an element of $\R_m$. If $D(f)$ is the entire set $\bf m$, then $f$ is called a full transformation of $\bf m$. If $D(f)$ is a proper subset of $\bf m$, then $f$ is called a proper injective partial transformation of $\bf m$.
\end{defi}

\subsection{Symplectic Renner Monoids}

To investigate the theory of symplectic Renner monoids, we require that $m=2n$, with $n\ges 1$.

\begin{defi}{\label{symplecticTransformation}} An injective partial transformation $f$ in $\R_{2n}$ is said to be \emph{symplectic} if $D(f)$ is admissible and $f$ takes admissible subsets of $D(f)$ to admissible subsets of $\bf 2n$. The set of all symplectic transformations is denoted by
\[
    \begin{aligned}
        \RSp_n   &= \{f\in \R_{2n} \mid f \mbox{ is symplectic}\}.
    \end{aligned}
\]
\end{defi}

The identity element $\id_{\bf 2n}$ of $\R_{2n}$ and the involution $\theta$ lie in $\RSp_{n}$.
Note that a full transformation in $\R_{2n}$  is symplectic if it takes admissible subsets of $\bf 2n$ to admissible subsets of $\bf 2n$, that is, it preserves $\mathscr{A}_n$. It is evident that $\id_{\bf 2n}$ is full symplectic. Furthermore, $\theta$ is full symplectic. First note that $\theta$ is bijective. Next if $I$ is proper admissible in $\mb{2n}$ and $J=\theta(I)$, then $J\cap\theta(J)=\theta(I)\cap I=\varnothing$. Thus, $\theta(I)$ is admissible.  

The full injective transformation
\[
h=\begin{pmatrix}
1&0&0&0\\
0&1&0&0\\
0&0&0&1\\
0&0&1&0
\end{pmatrix}
\]
is not symplectic, since $I=\{2,4\}$ is admissible yet $h(I)=\{2,3\}$ is not admissible. 

Combining Lemmas \ref{lemma:intersection} and \ref{basicFacts} d), we obtain the following result, which will be used implicitly throughout the rest of the paper.
\begin{lemma}{\label{lemma:properSymplectic}}
  \begin{enumerate}[\upshape a)]
    \item A proper injective partial transformation is symplectic if{f} both its domain and range are admissible.
        \vspace{-2mm}
    \item There are no proper symplectic injective partial transformations with domain size between $n+1$ and $2n-1$ inclusive.
  \end{enumerate}
\end{lemma}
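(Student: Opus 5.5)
Both parts follow from unwinding Definition \ref{symplecticTransformation} for a proper $f$, using Lemma \ref{lemma:intersection} (subsets of proper admissible sets are admissible) and Lemma \ref{basicFacts} d).

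For a), let $f\in\R_{2n}$ be proper, so $D(f)\neq\mb{2n}$.

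\emph{Forward direction.} Suppose $f$ is symplectic. Then $D(f)$ is admissible by definition, and it is proper since $f$ is proper. The set $D(f)$ is an admissible subset of itself, so $f(D(f))=R(f)$ is admissible. Hence both domain and range are admissible.

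\emph{Converse.} Suppose $D(f)$ and $R(f)$ are admissible. Take any admissible $I\subseteq D(f)$; we must show $f(I)$ is admissible. Since $f$ is injective, $|R(f)|=|D(f)|<2n$, so $R(f)\neq\mb{2n}$ and $R(f)$ is proper admissible. Now $f(I)\subseteq R(f)$, so $f(I)$ is admissible by Lemma \ref{lemma:intersection}. Thus $f$ is symplectic.

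The only subtle point is that we need $R(f)$ to be \emph{proper} admissible before invoking Lemma \ref{lemma:intersection}. This is supplied by injectivity, which forces $|R(f)|=|D(f)|<2n$.

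For b), let $f$ be a proper symplectic transformation. By a), $D(f)$ is a proper admissible set. Lemma \ref{basicFacts} d) then gives $|D(f)|\les n$. So no proper symplectic $f$ has domain size in the range $n+1,\dots,2n-1$.
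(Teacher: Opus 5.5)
Your proof is correct and is the argument the paper intends: the paper writes no proof, saying only that the lemma follows by combining Lemma~\ref{lemma:intersection} and Lemma~\ref{basicFacts}~d). Your write-up supplies those details, including the point that injectivity gives $|R(f)|=|D(f)|<2n$, so $R(f)$ is proper admissible and Lemma~\ref{lemma:intersection} applies.
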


An element $f$ of a monoid $(M,\circ, e)$ is said to be a {\bf unit} if there exists $g\in M$ such that $fg=gf=e$, the identity element of $M$. The set of the units of $M$ is a group, called the \textbf{unit group} of $M.$ 

The group $S_{2n}$ of all bijective transformations of $\bf {2n}$ is the unit group of $\R_{2n}$. In fact, $S_{2n}$ consists precisely of the non-singular rook matrices of size ${2n}\times {2n}.$
Both $S_{2n}$ and $\R_{2n}$ share the same identity element $\id_{\mb{2n}}$.

Let $f\in\R_{2n}$. Its (partial) inverse, denoted by $f^{-1}$, is defined by:
\[
    f^{-1}(b) = a \quad\text{if}\quad f(a)=b, \quad\text{where } a\in D(f), b\in R(f).
\]
If $f$ is full, then $f\in S_{2n}$ and its partial inverse $f^{-1}$ is the inverse of $f$ in $S_{2n}$. 

  \vspace{2mm}
\begin{lemma}{\label{Rspn}}
\begin{enumerate}[\upshape a)]
  \item If $f\in \RSp_n$, then $f^{-1}\in \RSp_n$.
  \vspace{-2mm}
  \item If 
   $f,g\in \RSp_n$, then $gf\in \RSp_n$. 
   \end{enumerate}
\noindent
Thus, $\RSp_n$ is a submonoid of $\R_{2n}$.
\end{lemma}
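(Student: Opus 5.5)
I will split into the full and proper cases, using Lemma \ref{lemma:properSymplectic} a) for proper maps and the definition (preservation of $\mathscr A_n$) for full ones.

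For a), first suppose $f$ is proper. Then $D(f)$ and $R(f)$ are admissible, and $D(f^{-1})=R(f)$, $R(f^{-1})=D(f)$. Since $f$ is proper, $D(f)\ne\mb{2n}$, hence $|R(f)|=|D(f)|<2n$ and $f^{-1}$ is also proper. Lemma \ref{lemma:properSymplectic} a) then shows $f^{-1}$ is symplectic.

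Now suppose $f$ is full, so $f\in S_{2n}$ and $f$ maps $\mathscr A_n$ into $\mathscr A_n$. The map $I\mapsto f(I)$ is injective on the finite set $\mathscr A_n$, so it is a bijection of $\mathscr A_n$ onto itself. Therefore its inverse, $I\mapsto f^{-1}(I)$, also maps $\mathscr A_n$ into $\mathscr A_n$. This means $f^{-1}$ is full symplectic. This finiteness/counting step is the only nontrivial point in a).

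For b), let $f,g\in\RSp_n$. If both are full, then $gf$ is full and preserves $\mathscr A_n$ as a composite of two maps that preserve it. Otherwise $gf$ is proper, since $|D(gf)|\les\min(|D(f)|,|D(g)|)<2n$, and by Lemma \ref{lemma:properSymplectic} a) it suffices to show that $D(gf)$ and $R(gf)$ are admissible. We have $D(gf)=f^{-1}(R(f)\cap D(g))$ and $R(gf)=g(R(f)\cap D(g))$. The set $R(f)\cap D(g)$ is admissible by Lemma \ref{lemma:intersection}, since $R(f)$ and $D(g)$ are admissible.

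The key claim is that a symplectic $h$ maps admissible subsets of $D(h)$ to admissible sets. For $h$ full this is the definition. For $h$ proper, $D(h)$ is proper admissible and $R(h)$ is proper admissible, so any subset of $R(h)$ is admissible by Lemma \ref{lemma:intersection}.

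We apply the claim twice. First, $R(f)\cap D(g)$ is an admissible subset of $D(g)$, so applying the claim to $h=g$ shows $R(gf)$ is admissible. Second, $R(f)\cap D(g)$ is an admissible subset of $R(f)=D(f^{-1})$, and $f^{-1}$ is symplectic by a), so applying the claim to $h=f^{-1}$ shows $D(gf)$ is admissible. (The degenerate case of an empty intersection is fine, since $\varnothing$ is admissible.)

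I expect the main obstacle to be the bookkeeping in b) between full and proper cases. In particular, applying the claim to $f^{-1}$ relies on a), so a) must be proved first.

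Finally, $\id_{\mb{2n}}\in\RSp_n$, so $\RSp_n$ is a submonoid of $\R_{2n}$.
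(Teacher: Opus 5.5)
Your proof is correct and matches the paper's: the same full/proper split in a), and in b) the same argument through $K=R(f)\cap D(g)$, with $D(gf)=f^{-1}(K)$ and $R(gf)=g(K)$ and part a) applied to $f^{-1}$. The only difference is in the full case of a), where the paper writes $f^{-1}$ as a positive power of $f$ in the finite group $S_{2n}$, while you observe that $I\mapsto f(I)$ is an injective self-map of the finite set $\mathscr A_n$ and hence a bijection; both are the same finiteness argument.
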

\begin{proof} 
 a)~ Let $f$ be symplectic. If $f$ is full, then $f$ preserves admissible sets, hence all powers $f^k$ $( k\ges 1)$ are symplectic. Note that $f$ is in $S_{2n}$; we deduce that $f^{-1}$ is symplectic, since $f^{-1}$ is a power of $f$.
 If $f$ is proper, then both $D(f)$ and $R(f)$ are proper admissible. But then $D(f^{-1})=R(f)$ and
 $R(f^{-1})=D(f)$. Thus, $f^{-1}$ is symplectic.
  
b)~ Let $f$, $g$ be symplectic and $h=gf$, where $f$ is applied first. Suppose both $f$ and $ g$  are full. Then $h$ is full, and since $f$ and $g$ preserve admissible sets, $h$ is symplectic. Next, suppose at least one of $f$ and $g$ is proper. Then $h$ is proper, and
 $K=R(f)\cap D(g)$ is proper admissible, whence $D(h)=f^{-1}(K)$ is proper admissible by a), and $R(h)=g(K)$ is proper admissible. Thus, $h\in \RSp_n$. We note that if $K$ is empty then
 $h$ is the zero matrix.
 
Since $\id_{2n}\in \RSp_n$, it follows that $\RSp_n$ is a submonoid of $R_{2n}$.
\end{proof} 
The monoids $\RSp_n$ are called the symplectic Renner monoids in the theory of reductive algebraic monoids. See \cite{CLSW, LLC, ZHL, ZL} for more details. 
\begin{lemma} Let
  $W_n:=\RSp_n\cap S_{2n}$. Then $W_n$ is the unit group of $\RSp_n$. 
\end{lemma}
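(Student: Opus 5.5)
We need to show two things: $W_n$ is a group under composition, and an element of $\RSp_n$ is a unit of $\RSp_n$ exactly when it lies in $W_n$. We use only the definitions and Lemma \ref{Rspn}.

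First, $W_n$ is a subgroup of $S_{2n}$. It contains $\id_{\mb{2n}}$, which is full symplectic. By Lemma \ref{Rspn} b) it is closed under composition, since a composition of two bijections is a bijection. By Lemma \ref{Rspn} a) it is closed under inverses: if $f\in W_n$ then $f^{-1}\in\RSp_n$, and because $f$ is full, $f^{-1}$ is its genuine inverse in $S_{2n}$, so $f^{-1}\in S_{2n}$. Hence each $f\in W_n$ has an inverse $g=f^{-1}\in\RSp_n$ with $fg=gf=\id_{\mb{2n}}$, so every element of $W_n$ is a unit of $\RSp_n$.

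Conversely, suppose $f\in\RSp_n$ is a unit. Then there is $g\in\RSp_n$ with $fg=gf=\id_{\mb{2n}}$. This is an identity in $\R_{2n}$, so $f$ is a unit of $\R_{2n}$. Since $S_{2n}$ is the unit group of $\R_{2n}$, we get $f\in S_{2n}$. We can also see this directly from domains and ranges. From $gf=\id_{\mb{2n}}$ we get $D(gf)\subseteq D(f)$, and $D(gf)=\mb{2n}$, so $D(f)=\mb{2n}$. Thus $f$ is full, and being injective it is a bijection. Therefore $f\in\RSp_n\cap S_{2n}=W_n$.

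Combining the two directions, the unit group of $\RSp_n$ equals $W_n$. There is no real obstacle here. The only point needing care is that a unit of the submonoid $\RSp_n$ must be a unit of the ambient monoid $\R_{2n}$, and this holds because both monoids share the identity $\id_{\mb{2n}}$.
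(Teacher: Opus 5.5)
Your proposal is correct and follows essentially the same argument as the paper: a unit of $\RSp_n$ is a unit of $\R_{2n}$ and hence lies in $S_{2n}$, and conversely the inverse of an element of $W_n$ lies in $\RSp_n$ by Lemma \ref{Rspn} a). Your extra check that $W_n$ is closed under composition and inverses is harmless but unnecessary, since the unit group of a monoid is automatically a group.
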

\begin{proof} 
Write $U$ for the unit group of $\RSp_n$. 
If $f\in U$ then $f\in \RSp_n$ and there exists $g\in \RSp_n$ such that $fg=gf=\id_{2n}$. Hence $f\in S_{2n}$, whence $f\in RSp_n\cap S_{2n}= W_n$.
On the other hand suppose that $f\in W_n$. Then $f\in \RSp_n$. Since also $f\in S_{2n}$,  there is $g\in S_{2n}$
  such that $fg=gf=\id_{2n}$. By  Lemma \ref{Rspn} a), $g\in \RSp_n$. Thus $f\in  U$. 
\end{proof}
To give a more concrete characterization of $W_n$, we can describe its elements in terms of their action on admissible sets: 
  \begin{align*}
     W_n &= \{f\in S_{2n}\mid f \text{ takes admissible sets to admissible sets}\}\\
        &= \{f\in\R_{2n}\mid f \text{ is full symplectic}\}.
  \end{align*}
We then have the following disjoint union.
\[
    \begin{aligned}
        \RSp_n   &= W_n \sqcup\{f\in \R_{2n} \mid D(f) \mbox{ and }R(f) \mbox{ are proper adimissible}\}.
    \end{aligned}
\]
Thus
$\RSp_n$ consists of the elements in its unit group $W_n$ and the proper symplectic injective partial transformations. 

If $n=1$, then 
$$
     \RSp_1 = \left\{  \begin{pmatrix}
                        0&0\\
                        0&0
                        \end{pmatrix},
                        \begin{pmatrix}
                        1&0\\
                        0&0
                        \end{pmatrix},
                        \begin{pmatrix}
                        0&1\\
                        0&0
                        \end{pmatrix},
                        \begin{pmatrix}
                        0&0\\
                        1&0
                        \end{pmatrix},
                        \begin{pmatrix}
                        0&0\\
                        0&1
                        \end{pmatrix},
                        \begin{pmatrix}
                        1&0\\
                        0&1
                        \end{pmatrix},
                        \begin{pmatrix}
                        0&1\\
                        1&0
                        \end{pmatrix}
                        \right\}.
$$

If $n=2$, then $\RSp_2$ consists of fifty seven elements \cite{ZHL}. The following proper injective partial transformation $f$ lies in $\RSp_2$, whereas $g$ doesn't lie in $\RSp_2$:
%
\[\label{fAndg}
\small{
               f =   \begin{pmatrix}
                        1&0&0&0\\
                        0&0&0&0\\
                        0&1&0&0\\
                        0&0&0&0\\
                    \end{pmatrix}
                    \quad\mbox{and}\quad
                    g =   \begin{pmatrix}
                        0&1&0&0\\
                        0&0&0&0\\
                        0&0&1&0\\
                        0&0&0&0\\
                    \end{pmatrix}.
    }
\]
This is because $D(f)=\{1, 2\}$ and $R(f) = \{1, 3\}$ are admissible, whereas $D(g)=\{2, 3\}$ is not admissible although the range $R(g) = \{1, 3\}$ is.

\begin{lemma}{\label{SymWeylGroup}}
The unit group $W_n$ of $\RSp_n$ is the centralizer of $\theta$ in $S_{2n}$. In other words, $W_n$ equals
\[
    C_{S_{2n}}(\theta) = \{f\in S_{2n} \mid \theta f = f\theta\}.
\] 
\end{lemma}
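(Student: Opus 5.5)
We prove the two inclusions $C_{S_{2n}}(\theta)\subseteq W_n$ and $W_n\subseteq C_{S_{2n}}(\theta)$. By the description of $W_n$ given above, a bijection $f\in S_{2n}$ lies in $W_n$ exactly when $f$ maps every admissible subset of $\mb{2n}$ to an admissible subset. Since $\mb{2n}$ itself is always sent to $\mb{2n}$, only proper admissible sets need checking. By Definition \ref{admissibleDefinition}, these are the sets $I$ with $\theta(I)\cap I=\varnothing$.

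\emph{First inclusion.} Suppose $f\theta=\theta f$ and let $I$ be proper admissible. Then
\[
\theta(f(I))\cap f(I)=f(\theta(I))\cap f(I)=f(\theta(I)\cap I)=\varnothing,
\]
where the middle equality holds because $f$ is injective. Hence $f(I)$ is admissible, and $f\in W_n$. This is the same computation the paper used to show that $\theta$ is symplectic.

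\emph{Second inclusion.} Let $f\in W_n$ and fix $i\in\mb{2n}$. We must show $f(\theta(i))=\theta(f(i))$. Set $j=f(i)$ and $k=f(\theta(i))$; these are distinct because $f$ is injective and $\theta(i)\neq i$ by Lemma \ref{basicFacts} a). Suppose, for contradiction, that $k\neq\theta(j)$. Then $\{j,k\}$ is admissible by Lemma \ref{basicFacts} b). The inverse $f^{-1}$ is also in $W_n$, by Lemma \ref{Rspn} a) together with the unit-group lemma. So $f^{-1}(\{j,k\})=\{i,\theta(i)\}$ would be admissible, contradicting Lemma \ref{basicFacts} c). Therefore $f(\theta(i))=\theta(f(i))$ for every $i$, which gives $f\theta=\theta f$.

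The only step needing care is the appeal to $f^{-1}$ preserving admissible sets: the definition of $W_n$ speaks only of $f$ sending admissible sets forward. Lemma \ref{Rspn} a) supplies exactly this, or one can argue directly that $f^{-1}$ is a power of $f$. An alternative that avoids $f^{-1}$ is a counting argument: $f$ is a bijection on the finite set of two-element admissible subsets, so it also maps the non-admissible pairs $\{i,\theta(i)\}$ among themselves. The remaining steps are routine.
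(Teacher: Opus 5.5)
Your proof is correct and is essentially the paper's argument. The first inclusion is the same injectivity computation; you write $f(\theta(I))\cap f(I)=f(\theta(I)\cap I)$ directly, where the paper argues by contradiction. The second inclusion uses Lemma \ref{basicFacts} b), c) together with $f^{-1}\in W_n$ exactly as the paper does.

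One small caveat, shared with the paper: Lemma \ref{basicFacts} c) actually fails for $n=1$, since $\{1,2\}=\mb{2}$ is admissible by definition. Nothing is lost, because for $n=1$ the supposition $f(\theta(i))\neq\theta(f(i))$ cannot hold: both elements differ from $f(i)$ in the two-element set $\mb{2}$, so they are equal.
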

\begin{proof}
We note that  $\theta\in W_n$ since $\theta\in RSp_n\cap S_{2n}$. 
 Denote $C=C_{S_{2n}}(\theta)$. 

We first show that $C\subseteq W_n$. 
Let $f\in C$. Then $f\in S_{2n}$, thus $D(f)=\bf 2n$, so it suffices to show that for any proper admissible subset $A$ of $\mb{2n}$ we have $f(A)$ is admissible. Suppose for a contradiction that $\theta(f(A))\cap f(A)$ is not empty.
Then there exist $i$, $j$ in $A$ such that $\theta (f(i))=f(j)$. As $f\in C$, it follows that $f(\theta (i))=f(j)$, 
whence $\theta(i)=j$ since $f$ is injective. Since $A$ is admissible, this is a contradiction.

We next show that $W_n\subseteq C.$ Suppose for a contradiction that there is $f\in W_n$ but $f\notin C.$ 
Then there is $i\in{\bf 2n}$ such that $f(\theta(i))\ne \theta(f(i)).$ It follows from Lemma \ref{basicFacts} b) that the set $\{f(i), f(\theta(i))\}$ is admissible. Since $f^{-1}\in W_n$, the set $f^{-1}(\{f(i), f(\theta(i)\}) = \{i, \theta(i)\}$ is admissible, which contradicts Lemma \ref{basicFacts} c). 
\end{proof}

It turns out that $W_n$ is isomorphic to the symplectic Weyl group in Lie theory. We sometimes just say that $W_n$ is the symplectic Weyl group.  If $n=1$, then $\theta = (12)$ and 
$W_1 =C_{S_2}(\theta)=\{(1), (12)\}.$ The matrix form is
$$W_1=\left\{\begin{pmatrix}
                        1&0\\
                        0&1
                        \end{pmatrix},
                        \begin{pmatrix}
                        0&1\\
                        1&0
                        \end{pmatrix}
                        \right\}.
$$
If $n=2$, then $\theta = (14)(23)$ and 

$W_2 =C_{S_4}(\theta)=\{(1), (14), (23), (14)(23), (12)(34), (1243), (1342), (13)(24)\}
$

$\hspace{5mm} = 
\left\{\scriptsize  
              \begin{pmatrix}
                        1&0&0&0\\
                        0&1&0&0\\
                        0&0&1&0\\
                        0&0&0&1\\
                \end{pmatrix},
                \begin{pmatrix}
                        0&0&0&1\\
                        0&1&0&0\\
                        0&0&1&0\\
                        1&0&0&0\\
                \end{pmatrix},
                \begin{pmatrix}
                        1&0&0&0\\
                        0&0&1&0\\
                        0&1&0&0\\
                        0&0&0&1\\
                \end{pmatrix},
                \begin{pmatrix}
                        0&0&0&1\\
                        0&0&1&0\\
                        0&1&0&0\\
                        1&0&0&0\\
                \end{pmatrix},\right.$
              
        $ \hspace{12mm}      \left.\scriptsize \begin{pmatrix}
                        0&1&0&0\\
                        1&0&0&0\\
                        0&0&0&1\\
                        0&0&1&0\\
                \end{pmatrix},
                \begin{pmatrix}
                        0&0&1&0\\
                        1&0&0&0\\
                        0&0&0&1\\
                        0&1&0&0\\
                \end{pmatrix},
                \begin{pmatrix}
                        0&1&0&0\\
                        0&0&0&1\\
                        1&0&0&0\\
                        0&0&1&0\\
                \end{pmatrix},
                \begin{pmatrix}
                        0&0&1&0\\
                        0&0&0&1\\
                        1&0&0&0\\
                        0&1&0&0\\
                \end{pmatrix}
\right\}.
$

\subsection{Upper Triangular Symplectic Renner Monoids $\mathbb B_n$}
Let $\mathbb B_n$ be the submonoid of $\RSp_n$ consisting of all the upper triangular symplectic injective partial transformations. That is,
\[
    \mathbb B_n = \{f\in \RSp_n \mid f \mbox{ is upper triangular}\}.
\]
It is evident that the unit group of $\mathbb B_n$ is the trivial
group $\{\id_{\mb{2n}}\}$.

\begin{defi}
The submonoid $\mathbb B_n$ of $\RSp_n$ is called the upper triangular symplectic Renner monoid. 
\end{defi}
If $n=1,$ then
$$
     \mathbb B_1 = \left\{  \begin{pmatrix}
                        0&0\\
                        0&0
                        \end{pmatrix},
                        \begin{pmatrix}
                        1&0\\
                        0&0
                        \end{pmatrix},
                        \begin{pmatrix}
                        0&1\\
                        0&0
                        \end{pmatrix},
                        \begin{pmatrix}
                        0&0\\
                        0&1
                        \end{pmatrix},
                        \begin{pmatrix}
                        1&0\\
                        0&1
                        \end{pmatrix}  
                      \right\}.
$$

If $n=2,$ then

$
\mathbb B_2 = 
\left\{  \scriptsize{
              \begin{pmatrix}
                        0&0&0&0\\
                        0&0&0&0\\
                        0&0&0&0\\
                        0&0&0&0\\
                \end{pmatrix},
                \begin{pmatrix}
                        1&0&0&0\\
                        0&0&0&0\\
                        0&0&0&0\\
                        0&0&0&0\\
                        \end{pmatrix},  
                        \begin{pmatrix}
                        0&1&0&0\\
                        0&0&0&0\\
                        0&0&0&0\\
                        0&0&0&0\\
                        \end{pmatrix},
                        \begin{pmatrix}
                        0&0&1&0\\
                        0&0&0&0\\
                        0&0&0&0\\
                        0&0&0&0\\
                        \end{pmatrix},}\right.$ \\[4pt]
 
\vspace{-2mm}                       
$ \hspace{13mm} \scriptsize{           \begin{pmatrix}
                        0&0&0&1\\
                        0&0&0&0\\
                        0&0&0&0\\
                        0&0&0&0\\
                        \end{pmatrix},
                        \begin{pmatrix}
                        0&0&0&0\\
                        0&1&0&0\\
                        0&0&0&0\\
                        0&0&0&0\\
                        \end{pmatrix},
                        \begin{pmatrix}
                        0&0&0&0\\
                        0&0&1&0\\
                        0&0&0&0\\
                        0&0&0&0\\
                        \end{pmatrix},
                        \begin{pmatrix}
                        0&0&0&0\\
                        0&0&0&1\\
                        0&0&0&0\\
                        0&0&0&0\\
                        \end{pmatrix},}$ \\[4pt]
      
      \vspace{-2mm}                  
$  \hspace{13mm}  \scriptsize{           \begin{pmatrix}
                        0&0&0&0\\
                        0&0&0&0\\
                        0&0&1&0\\
                        0&0&0&0\\
                        \end{pmatrix},
                        \begin{pmatrix}
                        0&0&0&0\\
                        0&0&0&0\\
                        0&0&0&1\\
                        0&0&0&0\\
                        \end{pmatrix},
                        \begin{pmatrix}
                        0&0&0&0\\
                        0&0&0&0\\
                        0&0&0&0\\
                        0&0&0&1\\
                        \end{pmatrix},
                   \begin{pmatrix}
                        1&0&0&0\\
                        0&1&0&0\\
                        0&0&0&0\\
                        0&0&0&0\\
                        \end{pmatrix},}$\\[4pt]
             
             \vspace{-2mm}           
$  \hspace{13mm} \scriptsize{        \begin{pmatrix}
                        1&0&0&0\\
                        0&0&1&0\\
                        0&0&0&0\\
                        0&0&0&0\\
                        \end{pmatrix},
                        \begin{pmatrix}
                        1&0&0&0\\
                        0&0&0&0\\
                        0&0&1&0\\
                        0&0&0&0\\
                        \end{pmatrix},
                        \begin{pmatrix}
                        0&1&0&0\\
                        0&0&0&1\\
                        0&0&0&0\\
                        0&0&0&0\\
                        \end{pmatrix},
                        \begin{pmatrix}
                        0&1&0&0\\
                        0&0&0&0\\
                        0&0&0&1\\
                        0&0&0&0\\
                        \end{pmatrix},}$\\[4pt]
                   
                   \vspace{-2mm}                         
$ \hspace{13mm} \scriptsize{         \begin{pmatrix}
                        0&0&1&0\\
                        0&0&0&1\\
                        0&0&0&0\\
                        0&0&0&0\\
                        \end{pmatrix},
                       \begin{pmatrix}
                        0&0&1&0\\
                        0&0&0&0\\
                        0&0&0&1\\
                        0&0&0&0\\
                        \end{pmatrix},
                        \begin{pmatrix}
                        0&0&0&1\\
                        0&0&0&0\\
                        0&0&1&0\\
                        0&0&0&0\\
                        \end{pmatrix},
                        \begin{pmatrix}
                        0&0&0&1\\
                        0&0&1&0\\
                        0&0&0&0\\
                        0&0&0&0\\
                        \end{pmatrix},}$\\[4pt]
         
         \vspace{-2mm}               
$ \hspace{13mm}  \scriptsize{        \begin{pmatrix}
                        0&0&0&1\\
                        0&1&0&0\\
                        0&0&0&0\\
                        0&0&0&0\\
                        \end{pmatrix},
                      \begin{pmatrix}
                        0&0&0&0\\
                        0&1&0&0\\
                        0&0&0&0\\
                        0&0&0&1\\
                        \end{pmatrix},
                        \begin{pmatrix}
                        0&0&0&0\\
                        0&0&1&0\\
                        0&0&0&0\\
                        0&0&0&1\\
                        \end{pmatrix},
                        \begin{pmatrix}
                        0&0&0&0\\
                        0&0&0&0\\
                        0&0&1&0\\
                        0&0&0&1\\
                        \end{pmatrix},}$\\[4pt]
                        
                \vspace{-2mm}        
$\hspace{13mm}\left. \scriptsize{          \begin{pmatrix}
                        1&0&0&0\\
                        0&1&0&0\\
                        0&0&1&0\\
                        0&0&0&1\\
                        \end{pmatrix}   }   
    \right\}.$ 

\noindent
We then have the orders $|\mathbb B_1| = 5$ and $|\mathbb B_2| = 25$.

\section{Orders of $\mathbb B_n$}\label{mainSection}

Let ${\bf 2n}=\{1, 2, ..., 2n\}$ and $n\ges 1$. Recall that members of $\mathbb B_n$ are upper triangular symplectic rook matrices of size $2n\times 2n$.
\begin{defi} Let $n\ges 1$. We define
\begin{align*}
\mathbb B_{n,\,r}&=\{ f\in \mathbb B_n ~\big|~|D(f)|=r\},\q 0\les r\les 2n.
\end{align*}
\end{defi}
By Lemma \ref{lemma:properSymplectic} b), $\mathbb B_{n,\,r}=\varnothing$ for $n+1\les r\les 2n-1$, and $\mathbb B_{n,\,2n}=\{\id_{2n}\}$ since the identity element $\id_{\mb{2n}}$ is the only element in $\mathbb B_n$ of rank $2n$. 
It follows immediately that
\begin{align*}
    \mathbb{B}_n &= \bigsqcup_{r=0}^{2n} \mathbb{B}_{n,\, r}= \Big(\bigsqcup_{r=0}^{n} \mathbb{B}_{n,\, r}\Big)\bigsqcup\,\{\id_{2n}\}.
\end{align*}

Define $b_{n,\,r}=|\mathbb B_{n,\,r}|$ for $0\les r\les n$. Then $b_{n,\,0}=1$. We agree that $b_{0,\,0}=1$ and $b_{n,\,r}=0$ if $n<r$ or $r<0$.  
The following lemmas are useful for calculating the order of $\mathbb{B}_n.$ Write $\bar{i}=\theta(i)$. Then $\bar 1=2n.$ 
\begin{lemma}\label{lemma-Xnr1} Let $n\ges 1$ and  $0\les r\les n$. Write
\begin{align*}
    X_{n,\,r}&=\{f\in \mathbb B_{n,\,r}\mid 1\notin R(f)\text{ and } \bar{1}\notin D(f)\}.
\end{align*}
%
Then there exists a bijection of $X_{n,\,r}$ onto $\mathbb B_{n-1,\,r}$. Thus, $|X_{n,\,r}|=b_{n-1,\,r}$.
\end{lemma}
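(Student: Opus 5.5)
The plan is to show that every $f\in X_{n,r}$ lives entirely on the ``inner'' indices $\{2,\dots,2n-1\}$, and then to delete the first and last rows and columns. Concretely, conjugation by the shift $\tau$ of Lemma \ref{tau-lemma} should give the bijection.

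First, the support of $f$. Let $f\in X_{n,r}$ and recall that $f(j)=i$ means $a_{ij}=1$, so upper triangularity says $f(j)\les j$ for every $j\in D(f)$.
\begin{itemize}
\item If $1\in D(f)$, then $f(1)\les 1$ forces $f(1)=1$, so $1\in R(f)$. This contradicts the definition of $X_{n,r}$, hence $1\notin D(f)$.
\item If $2n\in R(f)$, say $f(j)=2n$, then $j\ges 2n$, so $j=2n\in D(f)$. This contradicts $\bar 1=2n\notin D(f)$, hence $2n\notin R(f)$.
\end{itemize}
Together with the defining conditions, both $D(f)$ and $R(f)$ lie in $\{2,\dots,2n-1\}$. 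Moreover $r\les n<2n$, so $f$ is proper. By Lemma \ref{lemma:properSymplectic} a), $D(f)$ and $R(f)$ are proper admissible, and they avoid $\{1,2n\}$, so they lie in $\mathscr B_n$.

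Next, define the map and check it is a bijection. Set $\Phi(f)=\tau f\tau^{-1}$, which sends $k\mapsto \tau(f(k+1))=f(k+1)-1$ on $\tau(D(f))$. In matrix terms, this deletes row and column $1$ and row and column $2n$.
\begin{itemize}
\item $\Phi(f)$ is an injective partial transformation of $\mb{2n-2}$ with domain $\tau(D(f))$ of size $r$ and range $\tau(R(f))$.
\item It is upper triangular, since $\tau$ preserves order.
\item By Lemma \ref{tau-lemma} a), its domain and range lie in $\mathscr A_{n-1}^\times$. When $r\les n-1$ it is proper, so it is symplectic by Lemma \ref{lemma:properSymplectic} a) and lies in $\mathbb B_{n-1,r}$.
\item The inverse is $g\mapsto \tau^{-1}g\tau$. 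Its output is upper triangular with domain and range in $\{2,\dots,2n-1\}$, so $1\notin R$ and $2n\notin D$. By Lemma \ref{tau-lemma} a) again, the domain and range are proper admissible. Thus the output lies in $X_{n,r}$, and the two maps are mutually inverse.
\end{itemize}

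The main obstacle is the boundary bookkeeping where $\mathbb B_{n-1,r}$ might contain the full identity, or where the convention $b_{n-1,r}=0$ for $r>n-1$ applies. For $r=n$, $\Phi(f)$ would have an admissible proper domain of size $n$ in $\mb{2n-2}$, which Lemma \ref{basicFacts} d) forbids. Hence $X_{n,n}=\varnothing$, matching $b_{n-1,n}=0$. This also disposes of the only potential full element: for $n=2$, $r=2$, the identity of $\mathbb B_1$ has $r=2>n-1$ and is excluded by the convention. Finally, Lemma \ref{tau-lemma} needs $n\ges 2$, so the case $n=1$ is checked directly. Here $X_{1,0}$ is the zero map, giving $b_{0,0}=1$, and $X_{1,1}=\varnothing$ because $D(f)\subseteq\{2,\dots,2n-1\}=\varnothing$, giving $b_{0,1}=0$.
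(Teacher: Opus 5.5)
Your proposal is correct and follows essentially the same route as the paper. Like the paper, you show the rows and columns $1$ and $2n$ vanish and delete them; your $\tau f\tau^{-1}$ is exactly the paper's $f'$ with $b_{kl}=a_{k+1,l+1}$. You then use Lemma \ref{tau-lemma} a) for admissibility in both directions, rule out $r=n$ via Lemma \ref{basicFacts} d), and check $n=1$ directly.

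You are also slightly more explicit than the paper in two places. You derive $1\notin D(f)$ and $2n\notin R(f)$ from upper triangularity. You also note that the case $n=r=2$, where $\mathbb B_{1,2}=\{\id_{2}\}$ is nonempty, is covered only by the convention $b_{n-1,n}=0$, not by an actual bijection.
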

\begin{proof} 
If $n=1$, then $r=0, 1$. Thus, we have
 \begin{align*}
 &X_{1,\,0}=\{0\},\q |X_{1,\,0}|=1;\q b_{0,\,0}=1\\
 &X_{1,\,1}=\varnothing,\q~~~ |X_{1,\,1}|=0; \q b_{0,\,1}=0.
 \end{align*}
Thus, the lemma holds for $0\les r\les n=1.$

We now suppose that $n\ges 2$.  Let $0\les r\les n$. 
If $f\in X_{n,\,r}$, then row $1$ and column $\bar{1}$  are both zero and also row $\bar{1}$ and column 1 are zero. Therefore, $f$ takes the form
\[
\begin{pmatrix}
            0 & 0 & 0 &\cdots & 0& 0 \\
              & * & * &\cdots & *& 0  \\
              &   & * &\cdots & *& 0 \\
              &   &   &\ddots &\vdots&\vdots \\
              &   &   &       & *& 0  \\
              &   &   &       &  & 0  \\
             \end{pmatrix}_{2n \times 2n} \in X_{n,\,r}~,
\]
where $*$ is either 0 or 1, each row and each column of $f$ contains at most one $1$, and $D(f)$ and $R(f)$ are of cardinality $r$ and proper admissible. 

If $r=n$, then by Lemma \ref{basicFacts} d) and Lemma \ref{tau-lemma} we have 
\[
    X_{n,\,n}=\varnothing,\q |X_{n,\,n}|=0,\q b_{n-1,\,n}=0.
\]  
That is, the result is true for $r=n$.

Suppose now that $0\les r\les n-1$, and again $n\ges 2$.
Let $f'$ be obtained from $f\in X_{n,\,r}$ by deleting row $1$, row $\bar{1}$, column $1$, and column $\bar{1}$. If $f=[a_{ij}]\in X_{n,\,r}$, then $f'=[b_{k\,l}]$ where $b_{k\,l}=a_{k+1,\,l+1}$.
By Lemma \ref{tau-lemma} a) both the domain and range of $f'$ are admissible sets in $\mathscr A_{n-1}$. Hence $f'\in \mathbb B_{n-1,\,r}$. Then we have a map
$\kappa: f\mapsto f'$. The map $\kappa$ is onto:  given $g\in \mathbb B_{n-1,\,r}$, then adjoining two zero rows and two zero columns on the outside of $g$ yields an $f\in X_{n,\,r}$ for which $f'=g$.
The map $\kappa$ is one-to-one: if $f,\,g\in X_{n,\,r}$ and $f'= g'$, then  $f=g$
since $f$ indeed agrees with $g$ on row  $1$, row $\bar{1}$, column $1$, and column $\bar{1}$. Hence $|X_{n,\,r}|=b_{n-1,\,r}$.
\end{proof}
\begin{lemma}\label{prop-Xnr2}
Let $n\ges 1$ and $0\les r\les n$. Define
\[
Z_{n,\,r}=\{f\in\mathbb B_{n,\,r}\mid 
1\notin R(f)\text{ and } f(\bar 1)=j \;\text{ for some $2\les j\les 2n-1$}\}.
\]
%
Then 
$$
    |Z_{n,\,r}|=(2n-2r)b_{n-1,\,r-1}.
$$
\end{lemma}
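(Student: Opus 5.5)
The plan is to build an explicit bijection between $Z_{n,\,r}$ and the set of pairs $(g,j)$, where $g\in\mathbb B_{n-1,\,r-1}$ and $j$ ranges over $2n-2r$ admissible choices. This parallels the deletion argument of Lemma \ref{lemma-Xnr1}.

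First the degenerate cases. If $r=0$, then $Z_{n,0}=\varnothing$ because $\bar 1\notin D(f)$, and the right-hand side is $2n\cdot b_{n-1,-1}=0$. If $n=1$, there is no $j$ with $2\les j\les 2n-1$, so both sides are $0$. So assume $n\ges 2$ and $1\les r\les n$. Since $r\les n<2n$, every $f\in Z_{n,\,r}$ is proper, so by Lemma \ref{lemma:properSymplectic} a) both $D(f)$ and $R(f)$ are proper admissible.

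\textbf{Shape of $f$.} Let $f\in Z_{n,\,r}$ and write $f(\bar1)=j$.
\begin{enumerate}[\rm (1)]
\item Row $1$ is zero by hypothesis.
\item Since $\bar 1\in D(f)$ and $D(f)$ is admissible, $1\notin D(f)$, so column $1$ is zero.
\item Row $\bar1=2n$ is zero. Indeed, upper triangularity means $f(i)=2n$ forces $i=2n$, but $f(2n)=j\ne 2n$.
\item Column $\bar1$ has its single $1$ in row $j$.
\item Since $j\in R(f)$ and $R(f)$ is admissible, $\bar j\notin R(f)$.
\end{enumerate}

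\textbf{The forward map.} Delete rows $1,\bar1$ and columns $1,\bar1$ from $f$, and reindex by $\tau$. This gives a $(2n-2)\times(2n-2)$ upper triangular rook matrix $g$ with
\[
D(g)=\tau(D(f)\setminus\{\bar1\}),\qquad R(g)=\tau(R(f)\setminus\{j\}).
\]
Both sets are subsets of proper admissible sets that avoid $1$ and $2n$. By Lemma \ref{lemma:intersection} and Lemma \ref{tau-lemma} a), they lie in $\mathscr A_{n-1}^\times$. Hence $g\in\mathbb B_{n-1,\,r-1}$. The map $f\mapsto(g,j)$ is injective, since $f$ is recovered from $g$ together with the single entry in column $\bar1$.

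\textbf{Characterising the image.} The image consists exactly of the pairs $(g,j)$ with $g\in\mathbb B_{n-1,\,r-1}$, $2\les j\les 2n-1$, and $j,\bar j\notin \tau^{-1}(R(g))$.

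Conversely, given such a pair, define $f$ by placing $g$ in rows and columns $2,\dots,2n-1$ and adding the entry $f(\bar1)=j$. Then:
\begin{itemize}
\item $f$ is upper triangular, since $j<2n$.
\item $f$ is injective, since $j\notin\tau^{-1}(R(g))$.
\item $D(f)=\tau^{-1}(D(g))\cup\{2n\}$ is proper admissible by Lemma \ref{tau-lemma} b).
\item $R(f)=\tau^{-1}(R(g))\cup\{j\}$ is admissible. The set $\tau^{-1}(R(g))$ is admissible by Lemma \ref{tau-lemma} a), and adding $j$ keeps it admissible because $\bar j\notin\tau^{-1}(R(g))$ and $j\ne\bar j$.
\end{itemize}
So $f\in Z_{n,\,r}$, with $1\notin R(f)$ and $|D(f)|=r$.

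\textbf{Counting the choices of $j$.} The set $\{2,\dots,2n-1\}$ splits into $n-1$ pairs $\{i,\bar i\}$. The admissible set $\tau^{-1}(R(g))$ has $r-1$ elements, no two of which are $\theta$-related, so it meets exactly $r-1$ of these pairs, in one element each. A valid $j$ is precisely an element of one of the remaining $n-r$ untouched pairs. That gives $2(n-r)=2n-2r$ choices, independent of $g$, and therefore
\[
|Z_{n,\,r}|=(2n-2r)\,b_{n-1,\,r-1}.
\]

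The main point needing care is this count: the number of valid $j$ must not depend on $g$. That is exactly what the pair-counting shows.
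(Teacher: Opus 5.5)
Your proof is correct and follows the paper's argument: you delete rows and columns $1$ and $\bar 1$ to land in $\mathbb B_{n-1,\,r-1}$, then show each $g$ has exactly $2n-2r$ extensions, indexed by the $j\in\{2,\dots,2n-1\}$ lying outside $\tau^{-1}(R(g))\cup\theta(\tau^{-1}(R(g)))$. Where you differ, you are more explicit than the paper. You justify that count by pairing $\{2,\dots,2n-1\}$ into $\theta$-orbits, which the paper merely asserts, and you absorb the case $r=n$ into the general argument as the zero-choice case rather than treating it separately.
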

\begin{proof}
Take $n=1$. We have
\begin{align*}
&n=1,\,r=0:\q Z_{n,\,r}=Z_{1,\,0}=\varnothing;
&& (2n-2r)b_{n-1,\,r-1}=2b_{0,\,-1}=0\\
&n=1,\,r=1:\q Z_{n,\,r}=Z_{1,\,1}=\varnothing;&&
(2n-2r)b_{n-1,\,r-1}=0b_{0,\,0}=0.
\end{align*} 
The lemma holds when $n=1$.

We now suppose that $n\ges 2$. If $r=0$, then 
$Z_{n,\,r}=Z_{n,\,0}=\varnothing$ whereas $(2n-2r)b_{n-1,\,r-1}=2nb_{n-1,\,-1}=0$,\, so the result is true when $r=0$.
If $r=n$, then  $Z_{n,\,r}=Z_{n,\,n}=\varnothing$,\, whereas 
$(2n-2r)b_{n-1,\,r-1}=0$,\, so the result is true when $r=n$. 

We may now assume that $1\les r\les n-1$ so that $r-1\ges 0$ and $n-1\ges 1$.
If $f\in Z_{n,\,r}$, then $f$ is of the form
\[
\begin{array}{l@{}l}
\begin{pmatrix}
            0 & 0 & \cdots & 0&\cdots & 0& 0 \\
              & * & \cdots & *&\cdots & *& 0 \\
              &   & \ddots &\vdots&\cdots & \vdots& \vdots \\
              &   &        &0 & \cdots& 0& 1 \\
              &   &        &  &\ddots &\vdots&\vdots \\
              &   &        &  &       & *& 0  \\
              &   &        &  &       &  & 0  \\
             \end{pmatrix}_{2n \times 2n}  
   & \hspace{-2em}{\leftarrow\;}{\scriptsize a_{j,\,2n}=1}~,
   \end{array}
\]
where $2\les j\les 2n-1$. Deleting the first row, first column, last row, and last column of $f$ produces an element $f'\in\mathbb B_{n-1,\,r-1}$. 

We shall show that for each $g\in \mathbb B_{n-1,\,r-1}$, there are $2n-2r$ distinct $f$'s in $Z_{n,\,r}$ such that $f'=g$.  Indeed fix $g\in \mathbb B_{n-1,\,r-1}$, 
where $0\les r-1\les n-2$. Let $J=\tau^{-1}(R(g))$, where $\tau(k)=k-1$.
By Lemma \ref{tau-lemma} a), $J$ is admissible in $\mathscr A_n$, and $|J\cup \theta(J)|=2r-2$, where $\theta$ is the involution 
in $\mb {2n}$. Let $E(g)$ denote the complement of $J\cup\theta(J)$ in 
$\{2,\dots, 2n-1\}$, so that $|E(g)|=2n-2r\ge 2$.

For each  $j\in E(g)$ we extend $g$ by adjoining a zero row at the top and the bottom, a zero column at the left, and a column with $1$ in position $j$ at the right; this extension is denoted by $\hat{g}_j$. The domain
$D(\hat{g}_j)=\tau^{-1}(D(g))\cup\{2n\}$ is admissible in $\mathscr A_n$ by Lemma \ref{tau-lemma} b). The range 
$R(\hat{g}_j)=\tau^{-1}(R(g)) \cup\{j\}$
is also admissible since $\theta(R(\hat g_j))\cap R(\hat g_j)$ is empty, by the construction of $E(g)$ and that $\theta(j)\ne j$. 
Thus $ \hat g_j$ is in $Z_{n,\,r}$. Moreover $\kappa(\hat g_{j})=g$ for all $j\in E(g)$.

Next, note that if $j\ne j'$ in $E(g)$, then $ \hat g_j\ne \hat g_{j'}$.
Moreover, if $g\ne h$ in $\mathbb B_{n-1,\,r-1}$, then for each $j\in E(g)$ and $k\in E(h)$ we have
$ \hat g_j\ne  \hat h_k$.
Furthermore, if $f\in Z_{n,\,r}$, then by restriction we obtain an upper triangular symplectic injective partial transformation  $g$  in  $\mathbb B_{n-1,\,r-1}$ 
 in which $\hat g_j=f$ when $j=f(2n)$.

Thus $Z_{n,\,r}$ is a disjoint union of sets $\{ \hat g_j \mid j\in E(g)\}$ as $g$ runs over $\mathbb B_{n-1,\,r-1}$. 
Hence
\[
    |Z_{n,\,r}|=(2n-2r)b_{n-1,\,r-1}.
\]
The proof is complete. 
\end{proof}

\begin{lemma}\label{anti-diag-lemma}
Let $n\ges 1$, let $f\in \RSp_n$ and let $g$ be its anti-transpose. Then $f\in \mathbb B_{n,\,r}$  if{f} $g\in \mathbb B_{n,\,r}$ for $0\les r\les n$.
\end{lemma}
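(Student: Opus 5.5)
The plan is to express the anti-transpose through $\theta$ and then check the three defining properties of $\mathbb B_{n,r}$ one at a time: upper triangularity, being symplectic, and having domain of size $r$.

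\emph{Step 1: algebraic form of the anti-transpose.} If $f=[a_{ij}]$, its anti-transpose $g=[c_{ij}]$ has $c_{ij}=a_{\theta(j),\theta(i)}$. Since $\theta$ is a permutation matrix, this is the same as
\[
g=\theta f^{T}\theta=\theta f^{-1}\theta .
\]
Here we use that the transpose of a rook matrix is its partial inverse. In map terms, $g(k)=l$ if and only if $f(\theta(l))=\theta(k)$. Consequently
\[
D(g)=\theta(R(f)),\qquad R(g)=\theta(D(f)).
\]

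\emph{Step 2: $g$ is symplectic.} By Lemma \ref{Rspn} a), $f^{-1}\in\RSp_n$. The involution $\theta$ lies in $W_n\subseteq \RSp_n$. By Lemma \ref{Rspn} b), $\RSp_n$ is closed under composition, so $g=\theta f^{-1}\theta\in\RSp_n$.

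\emph{Step 3: upper triangularity and rank.} Suppose $c_{ij}=1$. Then $a_{\theta(j),\theta(i)}=1$. If $f$ is upper triangular, this forces $\theta(j)\les\theta(i)$, that is, $2n+1-j\les 2n+1-i$, so $i\les j$. Hence $g$ is upper triangular; equivalently, anti-transposition sends the upper triangle to itself. For the rank, $|D(g)|=|R(f)|=|D(f)|=r$ because $f$ is injective. Therefore $f\in\mathbb B_{n,r}$ implies $g\in\mathbb B_{n,r}$.

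\emph{Step 4: the converse.} Anti-transposition is an involution, since $\theta(\theta f^{-1}\theta)^{-1}\theta=f$. So if $g\in\mathbb B_{n,r}$, applying Steps 1--3 to $g$ gives $f\in\mathbb B_{n,r}$. The range $0\les r\les n$ plays no special role; the argument works for every $r$.

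The only step needing care is the identity $g=\theta f^{-1}\theta$, specifically keeping the index convention straight ($a_{ij}=1$ iff $f(j)=i$) when verifying it entrywise. Once that identity is in place, everything else follows from earlier lemmas.
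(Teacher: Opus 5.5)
Your proof is correct. It has the same skeleton as the paper's: the entrywise formula $g=[a_{\theta(j)\,\theta(i)}]$, then upper triangularity, rank, and symplecticity. The difference is the lemma you use to transfer symplecticity. The paper records the domains and ranges of $f$ and $g$ and implicitly uses the criterion that a proper map is symplectic iff its domain and range are admissible (Lemma \ref{lemma:properSymplectic} a)). You instead factor $g=\theta f^{-1}\theta$ and use closure of $\RSp_n$ under partial inverses and composition (Lemma \ref{Rspn}), together with $\theta\in W_n$.

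Your version gains two things. First, it gets the domains right. As your computation shows, $D(g)=\theta(R(f))$ and $R(g)=\theta(D(f))$, not $D(g)=R(f)$ and $R(g)=D(f)$ as the paper writes. So the paper's argument tacitly also needs that $\theta$ preserves admissibility, which it does. Second, your argument works uniformly on all of $\RSp_n$, units included. The domain/range criterion applies only to proper maps, and that is where the bound $r\les n$ quietly enters the paper's version; your remark that this bound plays no role is accurate.

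The paper's route is shorter and avoids the matrix identity $g=\theta f^{T}\theta$.
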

\begin{proof}
  Write $f=[a_{i\,j}].$   
  Clearly, $f$ is upper triangular if{f} its anti-transpose $g=[a_{\theta(j)\;\theta(i)}]$ is upper triangular. Note that $D(g)=R(f)$ and $R(g)=D(f)$. It follows that $f$ is symplectic if{f} $g$ is symplectic. The desired result follows.
\end{proof}

We describe the order of the upper triangular symplectic Renner monoid $\mathbb B_n$. 
Let $n\ges 1$ and $b_n=|\mathbb B_n|$, and we agree that $b_{0}=2$.
Recall that $b_{n,\,r}=|\mathbb B_{n,\,r}|$ for $0\les r\les n$. Then $b_n=1+\sum\limits_{r=0}^nb_{n,r}$ for $n\ges 0.$ Note that $b_{0,\,0}=1$ and $b_{n,\,r}=0$ if $r>n$ or $r<0$.  
\begin{theorem} {\label{mainTheorem}}
Let $n\ge 1$. Then
\begin{enumerate}[\upshape 1)]
  \item $b_{n, \,r} = b_{n-1, \,r} + (4n-4r+3)b_{n-1, ~r-1} + 4(n-r+1)^2 b_{n-1, ~r-2},\q\mbox{ for } n\ges 1 \text{ and } 0\les r\les n.$
  \item $b_n = 4b_{n-1} + 4\big[\sum_{r=1}^{n-1} (n-r)(n-r+1)b_{n-1, \,r-1}\big]-3,\q\mbox{ for }~ n\ges 1.$
  \item $b_n=1+\sum_{r=0}^{n-1}4\big[1+(n-r)(n-r-1)\big]b_{n-1,\,r},\q\mbox{ for }~ n\ges 1.$
\end{enumerate}
\end{theorem}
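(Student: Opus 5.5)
The plan is to prove 1) by splitting $\mathbb B_{n,r}$ according to where the outer rows and columns go. For that split I use column $\bar 1=2n$ and row $1$, i.e. the value $f(\bar 1)$ (if defined) and the preimage $f^{-1}(1)$ (if defined). Then 3) is obtained by summing 1) over $r$, and 2) is a rewriting of 3).

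\textbf{Preliminary observations for 1).} Since $f$ is upper triangular, column $1$ can only have its $1$ in row $1$, and row $\bar 1$ can only have its $1$ in column $\bar 1$. If $r\les n$ then $f\neq \id_{\mb{2n}}$, so $f$ is proper and $D(f)$, $R(f)$ are proper admissible (Lemma \ref{lemma:properSymplectic}). In particular they never contain both $1$ and $\bar 1$.

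\textbf{The cases.} I list the possibilities for the pair $(f^{-1}(1), f(\bar 1))$.
\begin{enumerate}[\rm (i)]
\item $1\notin R(f)$ and $\bar 1\notin D(f)$. This is $X_{n,r}$, which has $b_{n-1,r}$ elements by Lemma \ref{lemma-Xnr1}.
\item $1\notin R(f)$ and $f(\bar1)=j$ with $2\les j\les 2n-1$. This is $Z_{n,r}$, which has $(2n-2r)b_{n-1,r-1}$ elements by Lemma \ref{prop-Xnr2}.
\item $f^{-1}(1)=k$ with $2\les k\les 2n-1$ and $\bar1\notin D(f)$. Applying Lemma \ref{anti-diag-lemma}, the anti-transpose gives a bijection with $Z_{n,r}$: it swaps row $1$ with column $\bar1$ and preserves $\mathbb B_{n,r}$. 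This contributes another $(2n-2r)b_{n-1,r-1}$.
\item Three rigid cases: $f(1)=1$, or $f(\bar1)=\bar1$, or $f(\bar1)=1$. In each, admissibility forces the other outer row and column to be zero. Deleting rows and columns $1,\bar1$ and applying Lemma \ref{tau-lemma} gives a bijection onto $\mathbb B_{n-1,r-1}$. For the inverse direction, Lemma \ref{tau-lemma} b) makes domain and range admissible after adjoining $1$ or $\bar1$. These contribute $3b_{n-1,r-1}$.
\item $f^{-1}(1)=k$ and $f(\bar1)=j$ with $j,k\in\{2,\dots,2n-1\}$. Deleting the outer rows and columns gives $g\in\mathbb B_{n-1,r-2}$. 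Conversely, for fixed $g$ the admissible choices are $j\notin \tau^{-1}(R(g))\cup\theta\tau^{-1}(R(g))$ and $k\notin \tau^{-1}(D(g))\cup\theta\tau^{-1}(D(g))$. That gives $2n-2-2(r-2)=2(n-r+1)$ independent choices for each, as in the proof of Lemma \ref{prop-Xnr2}. The pair $\{1,j\}$ is admissible by Lemma \ref{basicFacts} b), since $j\neq\bar 1$, and likewise $\{k,\bar1\}$. This contributes $4(n-r+1)^2b_{n-1,r-2}$.
\end{enumerate}
Remaining combinations, such as $f(1)=1$ together with $\bar1\in D(f)$, or $f(\bar1)=\bar1$ together with $1\in R(f)$, are excluded by admissibility.

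\textbf{Completing 1).} Adding the cases gives the coefficient $3+2(2n-2r)=4n-4r+3$ on $b_{n-1,r-1}$, as required. Boundary values $r=0,1,n$ and $n=1$ are absorbed by the conventions $b_{n,r}=0$ for $r<0$ or $r>n$; I will check them separately as in the preceding lemmas.

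\textbf{Part 3).} Write $b_n=1+\sum_{r=0}^n b_{n,r}$, substitute 1), and collect the coefficient of $b_{n-1,s}$. The terms $r=s$, $s+1$, $s+2$ contribute $1$, $4n-4s-1$ and $4(n-s-1)^2$ respectively. With $t=n-s$ their sum is
\[
4\big[t+(t-1)^2\big]=4\big[1+t(t-1)\big],
\]
which is the coefficient in 3). The out-of-range term for $s=n-1$, $r=n+1$, has coefficient $0$, so nothing is lost.

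\textbf{Part 2).} Use $\sum_{s=0}^{n-1}b_{n-1,s}=b_{n-1}-1$ in 3), reindex by $r=s+1$, and drop the $s=n-1$ term because it has factor $(n-s-1)=0$. This gives $4b_{n-1}-3+4\sum_{r=1}^{n-1}(n-r)(n-r+1)b_{n-1,r-1}$. For $n=1$ the convention $b_0=2$ makes this consistent.

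I expect the main obstacle to be case (v), together with the exhaustiveness of the case list. One has to verify carefully that $j$ and $k$ really are independent and that each $g$ yields exactly $(2n-2r+2)^2$ extensions. One also has to check that no element is double counted across the cases, and that the degenerate ranks $r=n$ and $r\les 1$ give zero where the formula requires it.
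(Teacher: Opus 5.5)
Your proof is correct and follows essentially the same route as the paper. Your cases (i)--(v) are exactly the paper's seven pieces $X^1_{n,r}$, $X^2_{n,r}$, $Y^2_{n,r}$, $\{Y^1_{n,r}, X^3_{n,r}, Y^3_{n,r}\}$, $Y^4_{n,r}$, counted by the same deletion-of-outer-rows-and-columns and anti-transpose arguments; the only difference is cosmetic: you get 3) first by collecting the coefficient of $b_{n-1,s}$ and then deduce 2), whereas the paper sums 1) to obtain 2) and then rewrites it as 3).
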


\begin{proof}

We prove 1). 
If $n\ges 1$ and $r=0$, then 1) is valid since
\begin{align*}
&b_{n,\,0}=1,\q b_{n-1,\,0}=1, \;b_{n-1,\,-1}=0=b_{n-1,\,-2}.
\end{align*}
We now suppose that $1\les r\les n$. Thus, $\mathbb B_{n,\,r}= X_{n,\,r}\sqcup Y_{n,\,r}$ where 
\[
X_{n,\,r}=\{f\in \mathbb B_{n,\,r}\mid 1\notin R(f)\}\text{\quad and\quad}
Y_{n,\,r}=\{f\in \mathbb B_{n,\,r}\mid 1\in R(f)\}.
\]

We first calculate $|X_{n,\,r}|$. If $f\in X_{n,\,r}$, then the first row of $f$ is zero since $1\notin R(f)$, and hence $f$ takes the form
\[
\begin{pmatrix}
            0 & 0 & 0 &\cdots & 0& 0 \\
              & * & * &\cdots & *& *  \\
              &   & * &\cdots & *& * \\
              &   &   &\ddots &\vdots&\vdots \\
              &   &   &       & *& *  \\
              &   &   &       &  & *  \\
             \end{pmatrix}_{2n \times 2n} \in X_{n,\,r}~,
\]
Furthermore, if $\bar 1\in D(f)$, then there exists exactly one $j$ such that $f(\bar{1})=j$, where $j\ne 1$ since again $1\notin R(f)$, so that $2\les j\les 2n$. 
Partition $X_{n,\,r}=X_{n,\,r}^1\sqcup X_{n,\,r}^2\sqcup X_{n,\,r}^3$ where
\begin{align*}
X_{n,\,r}^1&=\{f\in \mathbb B_{n,\,r} \mid 1\notin R(f)\text{ and } \bar 1\notin D(f)\}.\\
X_{n,\,r}^2&=\{f\in \mathbb B_{n,\,r} \mid 1\notin R(f)\text{ and } f(\bar{1})=j \text{ for some } 2\les j\les 2n-1\}.\\
X_{n,\,r}^3&=\{f\in \mathbb B_{n,\,r}\mid 1\notin R(f) \text{ and } f(\bar{1})=2n\}.
\end{align*}
By Lemma \ref{lemma-Xnr1}, we obtain $$|X_{n,\,r}^1|=b_{n-1,\,r}.$$
From Lemma \ref{prop-Xnr2} it follows that  
$$|X_{n,\,r}^2|=(2n-2r)b_{n-1,\,r-1}.$$
If $f\in X_{n,\,r}^3$, then its first row is zero and $f(\bar{1})=2n$. Hence $f$ is of the form
\[
\begin{pmatrix}
            0 & 0 & 0 &\cdots & 0& 0 \\
              & * & * &\cdots & *& 0  \\
              &   & * &\cdots & *& 0 \\
              &   &   &\ddots &\vdots&\vdots \\
              &   &   &       & *& 0  \\
              &   &   &       &  & 1  \\
             \end{pmatrix}_{2n \times 2n}\in X_{n,\,r}^3~.
\]
A similar argument to that in Lemma $\ref{lemma-Xnr1}$ yields
\[
    |X_{n,\,r}^3|=b_{n-1,\,r-1}.
\]
From $|X_{n,\,r}| = |X_{n,\,r}^1|+|X_{n,\,r}^2|+|X_{n,\,r}^3|$, it follows that
\[\label{Xnr}
   |X_{n,\,r}| = b_{n-1, \,r} + (2n-2r+1) b_{n-1,\, ~r-1}, \q 1\les r\les n.\tag{1}
\]

We now calculate the cardinality of $Y_{n,\,r} = \{f\in \mathbb B_{n,\,r}\mid 1\in R(f)\}$, where again $1\les r\les n$.
If $f\in Y_{n,\,r}$, then there is exactly one $1$ in the first row of $f$. 
By the admissibility of $R(f)$, the last row of $f$ must be zero. 
Therefore, $f$ is of the form
\[
\overset{\text{column } i}{\overset{\downarrow}{
\begin{pmatrix}
            0 & 0 & \cdots & 1&\cdots & 0& 0 \\
              & * & \cdots & 0&\cdots & *& * \\
              &   & \ddots &\vdots&\cdots & \vdots& \vdots \\
              &   &        &0 & \cdots& *& * \\
              &   &        &  &\ddots &\vdots&\vdots \\
              &   &        &  &       & *& *  \\
              &   &        &  &       &  & 0  \\
             \end{pmatrix}}}_{2n \times 2n}\in Y_{n,\,r}~,
\]
where $*$ is either 0 or 1, each row and each column of $f$ contains at most one $1$, and $|D(f)|=|R(f)|=r$.  

If  $f\in Y^4_{n,\,r}$ we have the following four exhaustive and mutually disjoint cases:

a) $\bar{1}\notin D(f)$ and $f(1)=1$, where the latter implies the former by the admissibility of $D(f)$.

\vspace{-1mm}
b) $\bar{1}\notin D(f)$ and $f(i)=1$ for some $2\les i\les 2n-1$. 

\vspace{-1mm}
c) $\bar{1}\in D(f)$ and $f(\bar 1)=1$.

\vspace{-1mm}
d) $\bar{1}\in D(f)$ and $f(\bar 1)= j$ for some $2\les j\les 2n-1$. 

\noindent
We can divide $Y_{n,\,r}$ into four disjoint subsets. 
Let
\begin{align*}
    Y_{n,\,r}^1 &= \{f\in \mathbb B_{n,\,r}\mid f(1)=1\}.   \\
    Y_{n,\,r}^2 &= \{f\in \mathbb B_{n,\,r}\mid f(i)=1 \text{ for some } 2\les i\les 2n-1 \text{ and }\bar{1}\notin D(f) \}.   \\
    Y_{n,\,r}^3 &= \{f\in \mathbb B_{n,\,r}\mid f(\bar{1})=1\}.\\
    Y_{n,\,r}^4 &= \{f\in \mathbb B_{n,\,r}\mid f(i)=1 \text{ and } f(\bar{1})=j \text{ for some } 2\les i, \,j\les 2n-1\}.
\end{align*}
Then $Y_{n,\,r}= Y_{n,\,r}^1\sqcup Y_{n,\,r}^2\sqcup Y_{n,\,r}^3\sqcup Y_{n,\,r}^4$. 

We first calculate $|Y_{n,\,r}^1|$. Let $f\in Y_{n,\,r}^1$. Then $f(1)=1$ and $\bar{1}\notin D(f)$. Thus $f$ takes the form
\[
\begin{pmatrix}
            1 & 0 & 0 &\cdots & 0& 0 \\
              & * & * &\cdots & *& 0  \\
              &   & * &\cdots & *& 0 \\
              &   &   &\ddots &\vdots&\vdots \\
              &   &   &       & *& 0  \\
              &   &   &       &  & 0  \\
             \end{pmatrix}_{2n \times 2n}\in Y_{n,\,r}^1~.
\]
The matrices in $Y_{n,\,r}^1$ are exactly the anti-transposes of those in 
$X_{n,\,r}^3$. 
From Lemma $\ref{anti-diag-lemma}$ we obtain
\[\label{Ynr1}
   |Y_{n,\,r}^1| = |X_{n,\,r}^3| = b_{n-1, \,r-1},\quad 1\les r\les n.\tag{2}
\]

We then calculate $|Y_{n,\,r}^2|$. First assume that $n\ges 2.$ Let $f\in Y_{n,\,r}^2$. Then
 $f(i)=1$ for some $i \in \{2, \dots, 2n-1\}$ and $f(\bar{1})$ is undefined. Therefore, the last column of $f$ is zero, and $f$ is of the form
\[
\overset{\text{column } i}{\overset{\downarrow}{
\begin{pmatrix}
            0 & 0 & \cdots & 1&\cdots & 0& 0 \\
              & * & \cdots & 0&\cdots & *& 0 \\
              &   & \ddots &\vdots&\cdots & \vdots& \vdots \\
              &   &        &0 & \cdots& *& 0 \\
              &   &        &  &\ddots &\vdots&\vdots \\
              &   &        &  &       & *& 0  \\
              &   &        &  &       &  & 0  \\
             \end{pmatrix}}}_{2n \times 2n}\in Y_{n,\,r}^2~,
\]
where $2\les i\les 2n-1.$  
Notice that the matrices in $Y_{n,\,r}^2$ are exactly the anti-transposes of those in 
$X_{n,\,r}^2$. By Lemma \ref{anti-diag-lemma} and Lemma \ref{prop-Xnr2} we obtain
\[\label{Ynr2}
    |Y_{n,\,r}^2|= (2n-2r)b_{n-1, \,r-1},\quad 1\les r\les n. \tag{3}
\]
When $n=1=r$ then $Y^2_{n,\,r}=\varnothing$, hence the formula is also valid when $n=1$.

We next calculate $|Y_{n,\,r}^3|$. If $f\in Y_{n,\,r}^3$, then $f(\bar{1})=1$ and $f$ is of the form
\[
\begin{pmatrix}
            0  & 0 & 0 &\cdots & 0& 1 \\
              & * & * &\cdots & *& 0  \\
              &   & * &\cdots & *& 0 \\
              &   &   &\ddots &\vdots&\vdots \\
              &   &   &       & *& 0  \\
              &   &   &       &  & 0  \\
             \end{pmatrix}_{2n \times 2n}\in Y_{n,\,r}^3~.
\]
A  similar argument to that in Lemma $\ref{lemma-Xnr1}$ yields that the map $f\mapsto f'$
is a bijection of $Y_{n,\,r}^3$ onto $\mathbb B_{n-1,\,r-1}$. Therefore,
\[\label{Ynr3}
    |Y_{n,\,r}^3|=b_{n-1, \,r-1},\quad 1\les r\les n. \tag{4}
\]

We now calculate $|Y_{n,\,r}^4|$. Recall
\[
Y_{n,\,r}^4 = \{f\in \mathbb B_{n,\,r}\mid f(i)=1 \text{ and } f(\bar{1})=j \text{ for some } 2\les i, j\les 2n-1\}.
\]
If $f\in Y_{n,\,r}^4$, then the $(1,1)$, $(1,2n)$, $(2n,2n)$ entries of $f$ are  $0$; hence $f$ has the form
\[
\begin{array}{l@{}l}
\hspace{1.75cm}{\overset{\text{\small{column} $i$}}{\downarrow}}&\\
\begin{pmatrix}
             0 & 0& \cdots &1 & \cdots& 0&\cdots & 0& 0 \\
              & * & \cdots &0 & \cdots& * &\cdots & *& 0 \\
              &   & \ddots   &\vdots&\cdots&\vdots&\cdots & \vdots& \vdots \\
              &   & &0 &\cdots &*&\cdots & *& 0 \\
              &   && &0 &0 & \cdots& 0& 1 \\
              &   &&& &* &\cdots & *& 0 \\
              &   & &&&  &\ddots &\vdots&\vdots \\
              &   & &&&  &       & *& 0  \\
              &   & & && &       &  & 0  \\
             \end{pmatrix}_{2n \times 2n} 
&\hspace*{-2em}\leftarrow\text{\small{row $j$}}\qq~ \in Y_{n,r}^4~,
\end{array}
\]
where  $2\les i,\, j\les 2n-1$. 

First, let $2\les r\les n$.
Deleting the first row, first column, last row, and last column of each $f\in Y_{n,\,r}^4$ produces an element $f'\in\mathbb B_{n-1, \,r-2}$. We have a map $\kappa\colon Y_{n,\,r}^4 \to \mathbb B_{n-1,\, r-2}$. 
We first show that for each $g\in \mathbb B_{n-1,\,r-2}$, there are $2(n-r+1)$ distinct $f$'s in $Y_{n,\,r}^4$ such that $f'=g$.  Indeed fix $g\in \mathbb B_{n-1,\,r-2}$.
Let $I=\tau^{-1}(D(g))$ and $J=\tau^{-1}(R(g))$, where $\tau(k)=k-1$.
By Lemma \ref{tau-lemma} a), $I$ and $J$ are admissible in $\mathscr A_n$, and $|I\cup \theta(I)|=|J\cup \theta(J)|=2r-4$, where $\theta$ is the involution 
in $\mb {2n}$. Let $E(g)$ and $F(g)$ denote the complements of $I\cup\theta(I)$ and $J\cup\theta(J)$ in 
${\bf 2n}\setminus \{1, 2n\}$, respectively, so that $|E(g)|=|F(g)|=2(n-r+1)\ges 2$.

For each  $i\in E(g)$ and $j\in F(g)$ we extend $g$ by adding a zero row at the bottom, a zero column at the left, and a row $1$ with $1$ in position $i$, and a column $2n$ with $1$ in position $j$; this extension is denoted by $\hat{g}_{ij}$. The domain
$D(\hat{g}_{ij})=\tau^{-1}(D(g))\cup\{i\}$ and the range
$R(\hat{g}_{ij})=\tau^{-1}(R(g))\cup\{ j\}$
are also admissible in $\mathscr A_n$ since both $\theta(D(\hat{g}_{ij}))\cap D(\hat{g}_{ij})$ and $\theta(R(\hat{g}_{ij}))\cap R(\hat{g}_{ij})$ are empty by the construction of $E(f)$, $E(g)$ and that $\theta(i)\ne i$, $\theta(j)\ne j$. Thus $\hat {g}_{ij}$ is in $Y_{n,\,r}^4$. Moreover $\kappa(\hat g_{ij})=g$ for all $(i,j)\in E(g)\times F(g)$.

Note that if $(i,j)\ne (i_1,j_1)$ in $E(g)\times F(g)$, then $\hat {g}_{ij}\ne\hat {g}_{i_1j_1}$.
Moreover, if $g\ne h$ in $\mathbb B_{n-1,\,r-2}$ then for each pair $(i,j)\in E(g) \times F(g)$ and $(k,l)\in E(h)\times F(h)$ we have
$\hat {g}_{ij}\ne\hat{h}_{k l}$.
Furthermore, if $f\in Y_{n,\,r}^4$ then by restriction we obtain $g=f'\in \mathbb B_{n-1,\,r-2}$
in which $f=\hat{g}_{ij}$ where $f(i)=1$ and $j=f(2n)$.
 
Thus $Y_{n,\,r}^4$ is a disjoint union of sets $\{\hat {g}_{ij}\mid i\in E(g), j\in F(g)\}$  as $g$ runs over $\mathbb B_{n-1,\,r-2}$. 
We obtain, for $2\les r\les n$
\[\label{Ynr4}
    |Y_{n,\,r}^4|=4(n-r+1)^2 \,b_{n-1, \,r-2}. \tag{5}
\] 
Now, let $n\ges 1$ and $r=1$. Then $Y_{n,\,r}^4=Y_{n,\,1}^4=\varnothing$; hence $|Y_{n,\,1}^4|=0$. Note that in this case $b_{n-1, \,r-2}=b_{n-1, \,-1}=0$. Thus formula \eqref{Ynr4} still holds.

Since $|Y_{n,\,r}| = |Y_{n,\,r}^1|+|Y_{n,\,r}^2|+|Y_{n,\,r}^3|+|Y_{n,\,r}^4|$, by (\ref{Ynr1}), (\ref{Ynr2}), (\ref{Ynr3}), and (\ref{Ynr4}) we obtain
\[\label{Ynr}
    |Y_{n,\,r}| = 2(n-r+1)b_{n-1,\, ~r-1} + 4(n-r+1)^2 b_{n-1,\, ~r-2},\q 1\les r\les n.\tag{6}
\]
It follows from (\ref{Xnr}) and (\ref{Ynr}) that
\begin{align*}{\label{bnr}}
  b_{n,\, r} &= |X_{n,\,r}| + |Y_{n,\,r}|\\
            &= b_{n-1, \,r} + (4n-4r+3) b_{n-1, ~r-1} + 4(n-r+1)^2 b_{n-1,\, ~r-2},\q 1\les r\les n.\tag{7}
\end{align*}
Moreover when $n\ges 1$ and $r=0$ equality (\ref{bnr}) still holds since $b_{0,\,0}=1$ and $b_{n,\, r} = 0$ if $n<r$ or $r<0$,  so that both sides are equal to $1$. The proof of 1) is complete.

We can now prove 2). Recall that
$
   \mathbb{B}_n = \big(\bigsqcup_{r=0}^n \mathbb{B}_{n,\, r}\big)\bigsqcup\,\{\id_{2n}\}.
$ 
From (\ref{bnr}) we  obtain
\begin{align*}
    b_n &=1+ \sum_{r=0}^{n-1} b_{n-1, \,r} + 3\sum_{r=1}^{n}b_{n-1,\,r-1} +4\sum_{r=1}^{n}(n-r) b_{n-1, ~r-1} + 4\sum_{r=2}^{n}(n-r+1)^2 b_{n-1, \,r-2}  \\
        &=4\Big(1+\sum_{r=0}^{n-1}b_{n-1,\,r}\Big) +4\sum_{r=1}^{n}(n-r) b_{n-1, \,r-1} + 4\Big[\sum_{r=2}^{n}(n-r+1)^2 b_{n-1, \,r-2}\Big]-3\\
        &= 4b_{n-1} + 4\sum_{r=1}^{n-1} (n-r)b_{n-1, ~r-1} + 4\Big[\sum_{r=1}^{n-1} (n-r)^2 b_{n-1, \,r-1}\Big]-3 \\
        &= 4b_{n-1} + 4\Big[\sum_{r=1}^{n-1} (n-r)(n-r+1)b_{n-1, \,r-1}\Big] - 3~,\quad\mbox{for } n\ges 1.
\end{align*}
The proof of 2) is complete.

We now prove 3). From 2) it follows that
\begin{align*}
b_n&=4b_{n-1}+\Big[\sum_{r=1}^{n-1}4(n-r+1)(n-r)b_{n-1,\,r-1}\Big]-3 \\
&=4\Big(1+\sum_{r=0}^{n-1}4b_{n-1,\,r}\Big) +\Big[\sum_{r=0}^{n-2}4(n-r)(n-r-1)b_{n-1,\,r}\Big]-3\\
&=1+4b_{n-1,\,n-1}+\sum_{r=0}^{n-2}4b_{n-1,\,r}+\sum_{r=0}^{n-2}4(n-r)(n-r-1)b_{n-1,\,r}\\
&=1+4b_{n-1,\,n-1}+\sum_{r=0}^{n-2}4\big[1+(n-r)(n-r-1)  \big]     b_{n-1,\,r}\\
&=1+\sum_{r=0}^{n-1}4\big[1+(n-r)(n-r-1)  \big]    b_{n-1,\,r}
\end{align*}
where the last equality follows from $1+(n-r)(n-r-1)  =1$ when $r=n-1$.  
\end{proof}
The first seven cases of $b_{n, \,r}$ and $b_{n}$ are provided in the following table.
\begin{table}[h]
\centering
\renewcommand{\arraystretch}{1.2} 
\begin{tabular}{|r|rrrrrrrr|r||r|}
\hline
$b_{n, r}$ &r: 0 & 1 & 2 & 3 & 4 & 5 & 6&7&\; 2n&$b_n$\\ \hline
n:1                          & 1  & 3  & 1  &  &&&&& &5 \\
2                          & 1  & 10  &  13 &  &1&&&& &25 \\
3                          & 1  & 21  & 99  & 79 &&&1&& &201\\
4                          & 1  & 36  & 366  & 1108 &633&&&&1 &2145\\ 
5                          & 1  & 55  & 970  & 6430  & 14245  &6331    &&&1&28033\\ 
6                          & 1  & 78  & 2115 & 24500 & 119895 & 208926 & 75973 &&1& 431489\\ 
7                          & 1  & 105 & 4053 & 72485 & 622755 & 2409771& 3456775 & 1063623 &1& 7629569\\ \hline
\end{tabular}
\end{table}

\section{Generating Functions and Closed Formulas}

\subsection{The Exponential Generating Function of $b_n$}
We investigate the exponential generating function of $b_n$. Recall that $b_n=|\mathbb B_n|$ for $n\ges 1$, and $b_{0}=2$.
\begin{theorem}{\label{generatingBn}}
The series $\sum_{n=0}^{\infty}(b_n/n!)t^n$ converges for $|t|<1/2$ to the function 
\[b(t)=e^t\Big[1 + \frac{e^{t/(1-2t)}}{1-2t}\Big].\]
\end{theorem}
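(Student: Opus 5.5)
The plan is to pass to a bivariate generating function and solve the linear PDE that Theorem \ref{mainTheorem} 1) forces on it. First split off the identity element. By the decomposition $\mathbb B_n=(\bigsqcup_{r=0}^n\mathbb B_{n,r})\sqcup\{\id_{2n}\}$ and the conventions $b_0=2$, $b_{0,0}=1$, we have $b_n=1+S_n$ for all $n\ges 0$, where $S_n=\sum_{r=0}^n b_{n,r}$. The constant sequence $1$ contributes $e^t$. It therefore suffices to show
\[
\sum_{n\ges0}S_n\frac{t^n}{n!}=\frac{e^t\,e^{t/(1-2t)}}{1-2t}.
\]

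\textbf{Step 1: reindex the recurrence as an operator identity.} Put $c_{n,k}=b_{n,n-k}$ for $0\les k\les n$, and $c_{n,k}=0$ otherwise. Theorem \ref{mainTheorem} 1), with $k=n-r$, becomes
\[
c_{n,k}=c_{n-1,k-1}+(4k+3)c_{n-1,k}+4(k+1)^2c_{n-1,k+1}.
\]
Define the polynomials $P_n(x)=\sum_k c_{n,k}x^k$, so that $P_0=1$, $P_n(1)=S_n$, and write $D=d/dx$. Comparing coefficients of $x^k$ gives
\[
P_n=LP_{n-1},\qquad L=x+3+4xD+4DxD.
\]
The last term is correct because the coefficient of $x^k$ in $D(xD\,P)$ is $(k+1)^2c_{k+1}$. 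The boundary conventions $b_{n,r}=0$ for $r<0$ or $r>n$ are exactly what make this identity hold for every $k$; checking this is the one bookkeeping point that needs care.

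\textbf{Step 2: solve the evolution equation.} Set $F(x,t)=\sum_n P_n(x)t^n/n!$. Then $F_t=LF$ with $F(x,0)=1$. This initial value problem has a unique solution as a formal power series in $t$ with polynomial coefficients in $x$, since the coefficients are determined recursively. Try the ansatz $F=A(t)e^{xB(t)}$ with $A(0)=1$, $B(0)=0$. Using $DF=BF$ and $DxDF=(B+xB^2)F$, we get
\[
LF=\bigl(3+4B+x(1+2B)^2\bigr)F,\qquad F_t=\Bigl(\frac{A'}{A}+xB'\Bigr)F.
\]
Matching the coefficients of $x^1$ and $x^0$ gives:
\begin{itemize}
\item $B'=(1+2B)^2$, so $1+2B=1/(1-2t)$ and $B=t/(1-2t)$;
\item $A'/A=3+4B=1+2/(1-2t)$, so $A=e^t/(1-2t)$.
\end{itemize}
The resulting function is analytic near $t=0$ and its Taylor coefficients in $t$ are polynomials in $x$. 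By uniqueness it equals $F$.

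\textbf{Step 3: specialise and check convergence.} Setting $x=1$ gives $\sum S_nt^n/n!=e^te^{t/(1-2t)}/(1-2t)$, and hence $b(t)=e^t\bigl[1+e^{t/(1-2t)}/(1-2t)\bigr]$. This function is analytic in $|t|<1/2$, with its only singularity at $t=1/2$. So its Maclaurin series, which is $\sum b_nt^n/n!$, converges there.

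The main obstacle is Step 2, namely guessing an ansatz that closes under $L$. The exponential-in-$x$ form works because $L$ is first order in $x$-weight: it raises the degree in $x$ by at most one. As a cross-check, the result should match the generating function $\sum L_n^{(0)}(x)s^n=(1-s)^{-1}e^{-xs/(1-s)}$ of the Laguerre polynomials at $s=2t$, $x=-1/2$.
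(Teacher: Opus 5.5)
Your proposal is correct and follows the paper's proof essentially step for step. It uses the same reindexing $c_{n,k}=b_{n,n-k}$, the same polynomials satisfying $P_n=\bigl(4xD^2+(4x+4)D+(x+3)\bigr)P_{n-1}$, the same PDE with the ansatz $A(t)e^{xB(t)}$ leading to $B'=(1+2B)^2$ and $A'=(3+4B)A$, the same specialisation at $x=1$, and the same extra $e^t$ for the identity element. Your explicit uniqueness argument (the $t^n$-coefficients are determined recursively) and your analyticity argument for convergence on $|t|<1/2$ also supply two justifications the paper leaves implicit.
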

\begin{proof} 
We first investigate the exponential generating function of $s_n:=b_n-1$, where $n\ge 0$. Then, 
\[
    s_n = \sum_{r=0}^{n} b_{n,\, r},\q\text{with}\q s_0 = 1.
\]
By Theorem \ref{mainTheorem} part 1), if $1\les n$ and $0\les r\les n$ then 
\begin{align*}
b_{n,\,r}=b_{n-1,\,r}+(4n-4r+3)b_{n-1,\,r-1}+4(n-r+1)^2b_{n-1,\,r-2}\;,
\end{align*}
where $b_{0,\,0}=1$ and $b_{j,\,k}=0$ for $j<k$ or $k<0$.

Let $s(t)=\sum_{n=0}^{\infty}(s_n/n!)t^n$. 
We claim that $s(t)$ converges for $|t|<1/2$ to the function 
\[\frac{e^te^{t/(1-2t)}}{1-2t}.\]
Indeed, write $s_{n,\,k}=b_{n,\,n-k}$ for $0\les k\les n$. Then $b_{n,\,r}=s_{n,\,n-r}$,
$s_{0,\,0}=1$, and $s_{j,\,k}=0$ for $j<k$ or $k<0$. Thus
\begin{align*}{\label{snk}}
s_{n,\,k}&=s_{n-1,\,k-1}+(4k+3)s_{n-1,\,k}+4(k+1)^2s_{n-1,\,k+1},  \q 1\les n; 0\les k\les n.\tag{8}
\end{align*}
Multiplying by $x^k$ and summing, we obtain
\[{\label{sumOfSnkx}}
\sum_{k=0}^ns_{n,\,k}\;x^k=\sum_{k=0}^ns_{n-1,\,k-1}\;x^k+\sum_{k=0}^n(4k+3)s_{n-1,\,k}\;x^k
+\sum_{k=0}^n4(k+1)^2s_{n-1,\,k+1}\;x^k.  \tag{9}
\]
Define polynomials
\[{\label{SumOfSnkx}}
S_n(x)=\sum_{k=0}^n s_{n,\,k}\;x^k,\quad n\ges 0.\tag{10}
\]
Then
\[
S_{n-1}(x)=\sum_{k=0}^{n-1}s_{n-1,\,k}\;x^k;\;
S_{n-1}'(x)=\sum_{k=1}^{n-1}ks_{n-1,\,k}\;x^{k-1};\;
S_{n-1}''(x)=\sum_{k=2}^{n-1}k(k-1)s_{n-1,\,k}\;x^{k-2}.
\]
The three sums on the right of \eqref{sumOfSnkx} are
\begin{align*}
&\sum_{k=0}^ns_{n-1,\,k-1}\;x^k=x\sum_{k=-1}^{n-1}s_{n-1,\,k}\;x^k=xS_{n-1}(x)\,,\\
&\sum_{k=0}^n(4k+3)s_{n-1,\,k}\;x^k=\sum_{k=0}^{n-1}(4k+3)s_{n-1,\,k}\;x^k=4xS'_{n-1}(x)+3S_{n-1}(x)\,,\\
&\sum_{k=0}^n4(k+1)^2s_{n-1,\,k+1}\;x^k
=\sum_{k=1}^{n+1}4k^2s_{n-1,\,k}\;x^{k-1}\\
&\hspace{4.1cm}=4x\sum_{k=1}^{n-1}k(k-1)s_{n-1,\,k}\;x^{k-2}
+4\sum_{k=1}^{n-1}ks_{n-1,\,k}\;x^{k-1}&&{}\\
&\hspace{4.1cm}=4xS''_{n-1}(x)+4S_{n-1}'(x)\,.
\end{align*}
Therefore, \eqref{sumOfSnkx} becomes
\[
S_n(x)=4xS_{n-1}''(x)+(4x+4)S'_{n-1}(x)+(x+3)S_{n-1}(x), \q n\ges 1.
\]
We have
\[
\sum_{n=1}^\infty \frac{S_n(x)t^{n-1}}{(n-1)!}=4x\sum_{n=1}^\infty \frac{S_{n-1}''(x)t^{n-1}}{(n-1)!}+(4x+4)\sum_{n=1}^\infty \frac{S'_{n-1}(x)t^{n-1}}{(n-1)!}+(x+3)\sum_{n=1}^\infty \frac{S_{n-1}(x)t^{n-1}}{(n-1)!}.
\]
Define the function
\[{\label{uxt}}
    u(x,\,t)=\sum_{n=0}^\infty \frac{S_n(x)t^n}{n!}\tag{11}
\]
so that
$u(x,\,0)=S_0(x)=s_{0,\,0}=1$.  
We obtain a PDE with an auxiliary condition:
\[
\frac{\partial u}{\partial t}=4x \frac{\partial u^2}{\partial x^2}+(4x+4)\frac{\partial u}{\partial x}
+(x+3)u,\q u(x,\,0)=1.
\]
We seek a solution in the form $u(x,\,t)=\beta(t) e^{x\alpha(t)}$.
The auxiliary condition gives 
$$1=\beta(0)e^{x\alpha(0)} \q\text{for all } x.
$$ 
Hence $\alpha(0)=0$ and $\beta(0)=1$.
Substituting the ansatz in the differential equation, we find
\begin{align*}
e^{x\alpha}(\beta'+x\beta\alpha')=4x\alpha^2\beta e^{x\alpha} +(4x+4)\alpha\beta e^{x\alpha} 
+(x+3)\beta e^{x\alpha}.
\end{align*}
Equating coefficients of $xe^{x\alpha}$ and $e^{x\alpha}$ from both sides, we obtain two equations, namely
 \begin{align*}
 \alpha'&=(2\alpha+1)^2,\q \alpha(0)=0\,;\\[4pt]
 \beta'&=(4\alpha+3)\beta,\q\beta(0)=1\,.
 \end{align*}
The solutions are
\begin{align*}
&\alpha=\frac{t}{1-2t}\q\text{and}\q\beta=\frac{e^t}{1-2t},\q t<\tfrac12.
\end{align*}
Hence
\[{\label{uxtResult}}
u(x,\,t)=\frac{e^te^{tx/(1-2t)}}{1-2t},\q |t|<\tfrac12,~ x\in\R. \tag{12}
\]
Note that from \eqref{SumOfSnkx} we have
\begin{align*}
S_n(1)&=\sum_{k=0}^n s_{n,\,k}=\sum_{r=0}^nb_{n,\,r}=s_n,\q n\ges 0.
\end{align*}
From \eqref{uxt} and \eqref{uxtResult} it follows that
\begin{align*}{\label{stResult}}
s(t)&=\sum_{n=0}^\infty\frac{ s_n\,t^n}{n!}=u(1,\,t)=\frac{e^te^{t/(1-2t)}}{1-2t},\tag{13}
\end{align*}
and the series is convergent for $|t|<\frac12$. The claim holds.

Next, note that
$
b_n=1+s_n.
$
We obtain
\begin{align*}
  b(t)&=\sum_{n=0}^{\infty}b_n\frac{t^n}{n!}=\sum_{n=0}^{\infty}\frac{t^n}{n!}+\sum_{n=0}^{\infty}s_n\frac{t^n}{n!}=e^t+s(t)\\
    &=e^t + \frac{e^te^{t/(1-2t)}}{1-2t}~.
\end{align*}
The desired result follows.
\end{proof}

\subsection{An Explicit Closed Formula for $b_n$}

\begin{theorem} {\label{formula:bn}}
Let $b_n=|\mathbb B_n|$ for $n\ges 1$. Then
\[
b_n=1+\sum_{k=0}^n\Bigg\{\sum_{l=0}^k\binom{n}{k}
\binom{k}{l}\frac{ 2^k\,k!}{2^l\,\;l!}\,\Bigg\}\,,\q n\ges 1\,.
\]
\end{theorem}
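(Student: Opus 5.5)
The plan is to extract coefficients from the exponential generating function in Theorem \ref{generatingBn}. That theorem gives $b(t)=e^t+s(t)$ with
\[
s(t)=\frac{e^t e^{t/(1-2t)}}{1-2t},
\]
and $b_n=1+s_n$. Since $e^t$ contributes the leading $1$ in the formula, it suffices to show
\[
s_n=\sum_{k=0}^n\sum_{l=0}^k\binom nk\binom kl\frac{2^k k!}{2^l l!}.
\]
Write $s(t)=e^t\cdot g(t)$ with $g(t)=e^{t/(1-2t)}/(1-2t)$. Multiplying an exponential generating function by $e^t$ corresponds to the binomial transform: if $g(t)=\sum_k g_k t^k/k!$, then $s_n=\sum_{k}\binom nk g_k$. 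So the task reduces to proving
\[
g_k=\sum_{l=0}^k\binom kl\frac{2^k k!}{2^l l!}.
\]

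To compute $g_k$, expand $e^{t/(1-2t)}=\sum_{l\ge0}\frac{t^l}{l!}(1-2t)^{-l}$, so that
\[
g(t)=\sum_{l\ge0}\frac{t^l}{l!}(1-2t)^{-(l+1)}.
\]
Using $(1-2t)^{-(l+1)}=\sum_{m\ge0}\binom{l+m}{l}2^m t^m$, the coefficient of $t^k$ in $g(t)$ is
\[
\sum_{l=0}^k\frac{1}{l!}\binom{k}{l}2^{k-l}.
\]
Multiplying by $k!$ gives exactly $g_k=\sum_{l=0}^k\binom kl\frac{2^k k!}{2^l l!}$. Substituting into the binomial transform then yields the formula of the theorem. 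The identity holds for all $n\ge0$, since $b_0=2$ is consistent with it, but we only need $n\ge1$.

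The only delicate step is the coefficient extraction in $g(t)$: interchanging the double sum and correctly identifying $\binom{l+m}{l}$ with $m=k-l$ as $\binom{k}{l}$. For $|t|<1/2$ all series converge absolutely, so the rearrangement is justified; alternatively, everything can be read as identities of formal power series. As a sanity check, for $n=1$ the formula gives $1+(1)+(2+2)=5=b_1$. Alternatively, one could verify that the closed form satisfies the recursion of Theorem \ref{mainTheorem}, but the generating-function route is much shorter.
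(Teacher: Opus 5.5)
Your argument is correct and follows the paper's route: both proofs extract coefficients from $s(t)=e^t\,e^{t/(1-2t)}/(1-2t)$ through the Cauchy product with $e^t$. The only difference is that the paper identifies the coefficient of $t^k$ in $e^{t/(1-2t)}/(1-2t)$ as $2^kL_k(-\tfrac12)$ from the Laguerre generating function and explicit formula, whereas you expand it directly, which is self-contained and amounts to reproving that generating function at $x=-\tfrac12$. One small slip: in your $n=1$ check the $k=1$ terms are $2+1$, not $2+2$, so the sum is $1+1+3=5$.
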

\begin{proof}
  The Laguerre polynomials are 
\[
L_n(x)=\frac{e^x}{n!}\frac{d^n}{dx^n}(x^ne^{-x})=\sum_{k=0}^n\frac{n!(-x)^k}{(k!)^2(n-k)!}
\,,\q x\in\R,\; n\ges 0.
\]
Their exponential generating function is
\begin{align*}
\sum_{n=0}^\infty L_n(x)z^n&=\frac{e^{-xz/(1-z)}}{1-z},\q x\in\R,~ |z|<1.
\end{align*}
Substituting $x=-\frac12$ and $z=2t$, we obtain
\[
\sum_{n=0}^\infty L_n(-\tfrac12)(2t)^n=\frac{e^{t/(1-2t)}}{1-2t},\q|t|<\tfrac12.
\]
Substituting this in \eqref{stResult}, we have
\begin{align*}{\label{stLaguerre}}
\sum_{n=0}^\infty \frac{s_nt^n}{n!}=e^t\sum_{n=0}^\infty L_n(-\tfrac12)2^nt^n
&= \bigg( \sum_{n=0}^\infty\frac{t^n}{n!}\bigg)\bigg( \sum_{n=0}^\infty L_n(-\tfrac12)2^nt^n\bigg)\\
&=\sum_{n=0}^\infty \bigg\{\sum_{k=0}^n \frac{L_k(-\tfrac12)2^k}{(n-k)!}\bigg\} t^n \tag{14} \\
&=\sum_{n=0}^\infty \bigg\{\sum_{k=0}^n\sum_{l=0}^k\frac{k!2^{k-l}}{(l!)^2(n-k)!(k-l)!}  \bigg\}  t^n.
\end{align*}
Hence 
\[
s_n=
\sum_{k=0}^n\sum_{l=0}^k\frac{n!k!2^{k-l}}{(l!)^2(n-k)!(k-l)!}   
=\sum_{k=0}^n\bigg\{\sum_{l=0}^k\binom{n}{k}\binom{k}{l}\frac{2^k k!}{2^ll!}\bigg\}\,.
\]
As $b_n=1+s_n$ we obtain our result.
\end{proof}

The order $b_n$ of the upper triangular symplectic Renner monoids is closely related to the Laguerre polynomials.
\begin{corollary}{\label{coro:LaguerrePolynomial}}
Let $L_k(t)$ be the $k$th Laguerre polynomial. Then
$$
    b_n = 1+n!\sum_{k=0}^n\frac{2^k}{(n-k)!}L_k\left(-\tfrac12\right),\q n\ges 0.
$$
\end{corollary}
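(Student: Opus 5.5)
The corollary follows by reading off coefficients in the Cauchy product already computed in the proof of Theorem \ref{formula:bn}.

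By \eqref{stResult} together with the Laguerre generating function specialized at $x=-\tfrac12$, $z=2t$, we have for $|t|<\tfrac12$
\[
\sum_{n=0}^\infty \frac{s_n t^n}{n!}=e^t\sum_{k=0}^\infty L_k(-\tfrac12)2^k t^k .
\]
Both factors converge absolutely for $|t|<\tfrac12$: $e^t$ is entire, and the Laguerre series converges for $|2t|<1$. Hence the Cauchy product is legitimate. This is exactly display \eqref{stLaguerre}, whose coefficient of $t^n$ is $\sum_{k=0}^n L_k(-\tfrac12)2^k/(n-k)!$.

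Comparing coefficients of $t^n$ (uniqueness of power series coefficients on a disc) gives
\[
s_n=n!\sum_{k=0}^n\frac{2^k}{(n-k)!}L_k(-\tfrac12),\qquad n\ges 0.
\]
Since $b_n=1+s_n$ for all $n\ges 0$ (with the convention $b_0=2$ and $s_0=1$), the formula follows for every $n\ges 0$.

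As a sanity check at $n=0$: $L_0=1$, so the right side is $1+1=2=b_0$. At $n=1$: $L_1(x)=1-x$ gives $L_1(-\tfrac12)=\tfrac32$, so the right side is $1+1\cdot(1+2\cdot\tfrac32)=5=b_1$.

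The only point needing care is the index range. Theorem \ref{formula:bn} is stated for $n\ges1$, but the generating-function identity \eqref{stResult} holds from $n=0$ with $s_0=1$, which covers the case $n=0$ claimed in the corollary.
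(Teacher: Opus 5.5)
Your proof is correct and takes the same approach as the paper: you compare coefficients of $t^n$ in the Cauchy product \eqref{stLaguerre} to get $s_n=n!\sum_{k=0}^n \frac{2^k}{(n-k)!}L_k(-\tfrac12)$, then use $b_n=1+s_n$. Your remarks on absolute convergence and on the case $n=0$ (via $s_0=1$ and $b_0=2$) are accurate and tighten details the paper leaves implicit.
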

\begin{proof}
  The proof follows from comparing the coefficients of $t^n$ on both sides of \eqref{stLaguerre}.
\end{proof}

We shall obtain a recurrence relation for the sequence $b_n$ involving its immediate past three terms: $b_{n-1}$, $b_{n-2}$, and $b_{n-3}$.
\begin{theorem} {\label{th:pureRecursion}}
Let  $b_n=1+\sum\limits_{r=0}^nb_{n,r}$, where $n\ges 0$.  Then 
$b_0=2$, $b_1=5$, $b_2=25$ and
\[
b_n=4nb_{n-1}-4n(n-1)b_{n-2}+4(n-1)(n-2)b_{n-3}+4n-7,\q n\ges 3.
\]

\end{theorem}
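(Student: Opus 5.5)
The plan is to derive the recurrence from the closed-form exponential generating function of Theorem~\ref{generatingBn}, working first with $s_n=b_n-1$ and its generating function
\[
s(t)=\frac{e^te^{t/(1-2t)}}{1-2t},
\]
and then adding back the contribution of $e^t$.

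\emph{Step 1: a first-order linear ODE for $s$.} Logarithmic differentiation gives
\[
\frac{s'}{s}=1+\frac{1}{(1-2t)^2}+\frac{2}{1-2t}.
\]
Multiplying through by $(1-2t)^2$ yields
\[
(1-2t)^2 s'(t)=\big[(1-2t)^2+1+2(1-2t)\big]s(t)=(4-8t+4t^2)s(t)=4(1-t)^2s(t).
\]
So $s$ satisfies $(1-4t+4t^2)s'=4(1-2t+t^2)s$.

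\emph{Step 2: the recurrence for $s_n$.} Compare coefficients of $t^{n}/n!$ on both sides of the ODE, using $t^j s^{(i)}(t)\leftrightarrow n(n-1)\cdots(n-j+1)\,s_{n-j+i}$. The left side gives $s_{n+1}-4ns_n+4n(n-1)s_{n-1}$. The right side gives $4s_n-8ns_{n-1}+4n(n-1)s_{n-2}$. Shifting the index $n\to n-1$ produces
\[
s_n=4ns_{n-1}-4(n-1)(n-2)s_{n-2}-8(n-1)s_{n-2}+4(n-1)(n-2)s_{n-3}.
\]
The two $s_{n-2}$ terms combine, since $-4(n-1)(n-2)-8(n-1)=-4n(n-1)$. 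Hence
\[
s_n=4ns_{n-1}-4n(n-1)s_{n-2}+4(n-1)(n-2)s_{n-3},\qquad n\ges 3.
\]
This holds already for $n\ges 2$ if we set $s_{-1}=0$.

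\emph{Step 3: passing to $b_n=s_n+1$.} Substitute $s_k=b_k-1$. The constant term is
\[
-1+4n-4n(n-1)+4(n-1)(n-2)=-1+4n-4(n-1)\cdot 2=4n-7,
\]
which gives exactly the stated recurrence. The initial values $b_0=2$, $b_1=5$, $b_2=25$ come from the table and the convention $b_0=2$. As a sanity check, $n=3$ gives $12\cdot25-24\cdot5+8\cdot2+5=201$, matching the table.

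\emph{Expected obstacle.} The only delicate point is justifying the coefficient extraction. The series $s(t)$ is analytic for $|t|<1/2$ (Theorem~\ref{generatingBn}), so termwise differentiation and comparison of coefficients are valid. Beyond that, the work is keeping the index shifts correct so that the identity is valid precisely for $n\ges3$, where $b_{n-3}$ is defined.
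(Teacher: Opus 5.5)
Your proposal is correct and follows the paper's proof. Both derive $(1-2t)^2s'(t)=4(1-t)^2s(t)$ from the closed form of $s(t)$, compare coefficients of $t^n/n!$ to get the three-term recurrence for $s_n$, and then substitute $s_k=b_k-1$.

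There is one sign slip in Step~3. The expression $-1+4n-4n(n-1)+4(n-1)(n-2)$ you display equals $7-4n$, not $4n-7$; it is the constant after moving everything to one side. The constant on the right-hand side is its negative, $1-4n+4n(n-1)-4(n-1)(n-2)=4n-7$, so your final recurrence and your check $b_3=201$ are right.
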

\begin{proof} 
Let $s_n=\sum_{r=0}^nb_{n,r}$. From \eqref{stResult} it follows that
\begin{align*}
s(t):=\sum_{n=0}^\infty\frac{ s_n\,t^n}{n!}=\frac{e^te^{t/(1-2t)}}{1-2t},\q |t|<\tfrac12.
\end{align*}
A straightforward calculation shows that
\begin{align*}
 \frac{s'(t)}{s(t)}=\frac{4(1-t)^2}{(1-2t)^2}.
\end{align*}
That is,
\[
(4t^2-4t+1)s'(t)=(4t^2-8t+4)s(t).
\]
The left side is
\begin{align*}
4\sum_{n=2}^\infty &\frac{ n(n-1)s_{n-1}t^n}{n!}-4\sum_{n=1}^\infty\frac{ns_nt^n}{n!}
+\sum_{n=1}^\infty\frac{s_{n+1}t^n}{n!}.
\end{align*}
The right side is
\begin{align*}
4\sum_{n=0}^\infty &\frac{ s_{n}t^{n+2}}{n!}-8\sum_{n=0}^\infty\frac{s_nt^{n+1}}{n!}
+4\sum_{n=0}^\infty\frac{s_{n}t^n}{n!}\\[6pt]
&=4\sum_{n=2}^\infty \frac{ s_{n-2}t^{n}}{(n-2)!}-8\sum_{n=1}^\infty\frac{s_{n-1}t^{n}}{(n-1 )!}
+4\sum_{n=0}^\infty\frac{s_{n}t^n}{n!}\\[6pt]
&=4\sum_{n=2}^\infty \frac{ n(n-1)s_{n-2}t^{n}}{n!}-8\sum_{n=1}^\infty\frac{ns_{n-1}t^{n}}{n!}
+4\sum_{n=0}^\infty\frac{s_{n}t^n}{n!}.
\end{align*}
Comparing coefficients of $t^0$, $t$ gives $s_1=4s_0=4$ and 
$-4s_1+s_2=-8s_0+4s_1$ so $ s_2=24$.
Whereas, for $n\ges 2$ we have
\begin{align*}
4n(n-1)s_{n-1}-4ns_n+s_{n+1}=4n(n-1)s_{n-2}-8ns_{n-1}+4s_n
\end{align*}
which rearranges to 
\[
s_n=4ns_{n-1}-4n(n-1)s_{n-2}+4(n-1)(n-2)s_{n-3},\q n\ges 3. 
\]
Since $b_n=s_n+1$, we obtain
$b_0=2$, $b_1=5$, $b_2=25$ and
\[
b_n=4nb_{n-1}-4n(n-1)b_{n-2}+4(n-1)(n-2)b_{n-3}+4n-7,\q n\ges 3.\qedhere
\]
\end{proof}

\subsection{The Exponential Generating Function of \(b_{n,\,n-r}\)}
We now fix an $r\ges 0$, and consider the exponential generating function for the sequence $n\mapsto b_{n,n-r}$. We note that $\sum\limits_{n=0}^\infty  \frac{b_{n,n-r\,}\,t^n}{n!}=\sum\limits_{n=r}^\infty  \frac{b_{n,n-r\,}\,t^n}{n!}$.
\begin{theorem}{\label{generatingBnr}}
Let $n\ges 1$ and let $b_{n,r}=|\mathbb B_{n,r}|$ for $0\les r\les n$. Then 
\[
\sum_{n=r}^\infty  \frac{b_{n,n-r\,}\,t^n}{n!}=\frac{t^re^t}{r!(1-2t)^{r+1}},\qq |t|<\tfrac12,~ r\ges 0.
\]
\end{theorem}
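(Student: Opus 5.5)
The plan is to reuse the bivariate generating function already computed in the proof of Theorem \ref{generatingBn}. There we set $s_{n,k}=b_{n,n-k}$, $S_n(x)=\sum_{k=0}^n s_{n,k}x^k$, and $u(x,t)=\sum_{n\ges0}S_n(x)t^n/n!$, and we showed in \eqref{uxtResult} that
\[
u(x,t)=\frac{e^t e^{tx/(1-2t)}}{1-2t},\qquad |t|<\tfrac12 .
\]
The coefficient of $x^r$ in $u(x,t)$ is
\[
\sum_{n\ges 0} s_{n,r}\,\frac{t^n}{n!}=\sum_{n\ges r} b_{n,n-r}\,\frac{t^n}{n!},
\]
because $s_{n,r}=0$ when $n<r$. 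So the theorem reduces to extracting the coefficient of $x^r$ from the closed form.

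The extraction is immediate. We expand
\[
e^{tx/(1-2t)}=\sum_{r\ges0}\frac{1}{r!}\Big(\frac{t}{1-2t}\Big)^r x^r,
\]
which gives the coefficient of $x^r$ in $u$ as
\[
\frac{e^t}{1-2t}\cdot\frac{t^r}{r!(1-2t)^r}=\frac{t^re^t}{r!(1-2t)^{r+1}}.
\]
This is exactly the claimed formula.

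The one step that needs care is justifying that comparing coefficients of $x^r$ is legitimate. The derivation in Theorem \ref{generatingBn} found $u$ through an ansatz for the PDE, so I will not lean on uniqueness of PDE solutions. Instead I would check directly that $F(x,t)=e^te^{tx/(1-2t)}/(1-2t)$ reproduces the coefficient recurrence. Write $F=\sum_n P_n(x)t^n/n!$; each $P_n$ is a polynomial in $x$ of degree at most $n$, since the coefficient of $x^r$ in $F$ is $O(t^r)$. Because $F$ satisfies $\partial_tF=4xF_{xx}+(4x+4)F_x+(x+3)F$ and $F(x,0)=1$, the $P_n$ obey the same recursion $P_n=4xP_{n-1}''+(4x+4)P_{n-1}'+(x+3)P_{n-1}$ with $P_0=1$ that the $S_n$ obey. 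Hence $P_n=S_n$ by induction.

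Equality of $\sum_n S_n(x)t^n/n!$ with $F$ as formal power series in $x$ and $t$ then permits comparing coefficients of $x^rt^n$. Convergence for $|t|<\tfrac12$ follows because the right-hand side is analytic there. I would also check small cases against the table as a sanity test: for $r=0$, $b_{n,n}$ has EGF $e^t/(1-2t)$, so $b_{1,1}=3$ and $b_{2,2}=13$, matching the table.
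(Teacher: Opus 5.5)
Your proposal is correct and follows the paper's proof: both take the closed form \eqref{uxtResult} for $u(x,t)$, expand $e^{tx/(1-2t)}$ in powers of $x$, and read off the coefficient of $x^r$. Your added step is a worthwhile tightening: you check that the closed form satisfies the PDE, so its Taylor coefficients obey the same recursion as the $S_n$ and equal them by induction, which settles the uniqueness question that the paper's ansatz-based derivation of \eqref{uxtResult} leaves implicit.
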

\begin{proof}
Fix $r\ges 0$.
Write $ s_{n,\,r}=b_{n,\,n-r}$ so that
\[\label{brt}
   \sum_{n=r}^\infty \frac{s_{n,\,r\,\,}t^n}{n!}
    = \sum_{n=r}^\infty \frac{ b_{n,n-r\,}\,t^n}{n!}.\tag{15}
\]
By \eqref{SumOfSnkx}, \eqref{uxt} and \eqref{uxtResult}, and the series expansion for $\exp x$ we have
\begin{align*}
\sum_{n=0}^\infty \Big\{\sum_{r=0}^n \frac{s_{n,r\,}x^rt^n}{n!}\Big\}&=\frac{e^te^{tx/(1-2t)}}{1-2t}
        =\frac{e^t}{1-2t} \sum_{r=0}^{\infty}\frac{t^rx^r}{r!(1-2t)^r}. \tag{16}
\end{align*}
An interchange of order of summation and (15) gives
\begin{align*}
\sum_{n=0}^\infty \Big\{\sum_{r=0}^n \frac{s_{n,r\,}x^rt^n}{n!}\Big\}
=\sum_{r=0}^\infty\Big\{ \sum_{n=r}^\infty\frac{s_{n,r\,\,}x^rt^n}{n!} \Big\}
=\sum_{r=0}^\infty\sum_{n=r}^\infty \frac{ b_{n,n-r\,}\,x^rt^n}{n!}.
\end{align*}
Hence
\[
\sum_{r=0}^\infty\Big\{\sum_{n=r}^\infty \frac{ b_{n,n-r\,}\,t^n}{n!}\Big\} x^r
=\frac{e^t}{1-2t} \sum_{r=0}^{\infty}\frac{t^r}{r!(1-2t)^r}\,x^r.
\]
Comparing coefficients of $x^r$, we obtain
\[ \label{snkGenerating}
    \sum_{n=r}^\infty  \frac{b_{n,n-r\,}\,t^n}{n!} = \frac{t^r e^t} {r!(1-2t)^{r+1}},\q |t|<\tfrac12,~ r\ge 0. \tag{17}
\]
The proof is complete.
\end{proof}

\subsection{An Explicit Closed Form for Every \(b_{n,\,r}\)}

\begin{theorem}{\label{formula:bnr}}
Let $n\ges 1$ and let $0\les r\les n$. Then
\[
b_{n,r}= \sum_{k=0}^r\frac{n!\,2^k}{(n-r)!(r-k)!}\binom{n-r+k}{k}.
\]
\end{theorem}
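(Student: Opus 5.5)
The plan is to read the formula off the exponential generating function in Theorem \ref{generatingBnr} by extracting a single coefficient. Fix $n\ges 1$ and $0\les r\les n$, and put $m=n-r$, so that $b_{n,r}=b_{n,n-m}$. Theorem \ref{generatingBnr}, applied with $m$ in place of $r$, gives
\[
\sum_{N=m}^\infty \frac{b_{N,N-m}\,t^N}{N!}=\frac{t^m e^t}{m!\,(1-2t)^{m+1}},\qquad |t|<\tfrac12 .
\]
Hence $b_{n,r}$ equals $n!$ times the coefficient of $t^n$ on the right-hand side.

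To extract that coefficient, I will expand the two non-monomial factors as power series, both convergent for $|t|<\tfrac12$. The exponential factor is $e^t=\sum_{j\ges0} t^j/j!$. For the other factor I use the negative binomial series
\[
(1-2t)^{-(m+1)}=\sum_{k\ges 0}\binom{m+k}{k}2^k t^k .
\]
Multiplying by $t^m/m!$, the coefficient of $t^n$ comes from the Cauchy product of these two series, restricted to indices with $j+k=n-m=r$. This gives
\[
\frac{1}{m!}\sum_{k=0}^{r}\binom{m+k}{k}\frac{2^k}{(r-k)!}.
\]
Multiplying by $n!$ and substituting $m=n-r$ yields
\[
b_{n,r}=\sum_{k=0}^r\frac{n!\,2^k}{(n-r)!\,(r-k)!}\binom{n-r+k}{k},
\]
which is the claimed formula.

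The only point needing care is the legitimacy of comparing coefficients. Both sides are analytic in $|t|<\tfrac12$, so identifying their Taylor coefficients is justified, and the Cauchy product of the two absolutely convergent series is valid there. The bookkeeping to watch is the change of index from $r$ to $m=n-r$.

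As a sanity check, take $n=r=1$: the $k=0$ term is $1$ and the $k=1$ term is $2$, giving $b_{1,1}=3$, which agrees with the table.
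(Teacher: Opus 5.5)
Your proposal is correct and follows essentially the same route as the paper. Both extract the coefficient of $t^n$ from the generating function of Theorem~\ref{generatingBnr}, after the index shift $m=n-r$ (which the paper writes as $r=N-R$), using the Cauchy product of $e^t$ with the negative binomial expansion of $(1-2t)^{-(n-r+1)}$.
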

\begin{proof}
Fix $r\ges 0$. By \eqref{snkGenerating} 
\[
\sum_{n=r}^\infty  \frac{b_{n,n-r\,}\,t^n}{n!}=\frac{t^re^t}{r!(1-2t)^{r+1}}.
\]
Fix $N$, $R$ with $N\ges 1$ and $0\les R\les N$. Put $r=N-R$; we obtain
\[\label{nnNr}
\sum_{n=N-R}^\infty  \frac{b_{n,n-N+R\,}\,t^n}{n!}=\frac{t^Nt^{-R}e^t}{(N-R)!(1-2t)^{N-R+1}}.\tag{18}
\]
In the left side the coefficient of $t^N$ is
\[\dfrac{b_{N,R}}{N!}.\]
Put $\alpha=N-R+1$, so $\alpha$  is a positive integer. We have expansions
\[
e^t=\sum_{n=0}^\infty \frac{t^n}{n!},\text{\q and \q}
(1-2t)^{-\alpha}=\sum_{n=0}^\infty \binom{-\alpha} {n}(-2t)^n.
\]
Hence 
\[e^t(1-2t)^{-\alpha}=\sum_{n=0}^\infty c_n t^n \text{\q where\q}
c_n=\sum_{k=0}^n\frac1{(n-k)!}\binom{-\alpha} {k}(-1)^k2^k.
\]
It can easily be shown that
\begin{align*}
\binom{-\alpha} {k}(-1)^k
=\binom{N-R+k}{k}.
\end{align*}
The right side of \eqref{nnNr}  is 
\[
\frac{t^Nt^{-R}}{(N-R)!}\{c_0+c_1t+\dots+c_Rt^R+\dots\}.
\]
Hence
\[
\frac{b_{N,R}}{N!}=\frac{c_R}{(N-R)!}=\frac{1}{(N-R)!}
\sum_{k=0}^R\frac{2^k}{(R-k)!}\binom{N-R+k}{k}.\qedhere
\]
\end{proof}


\end{document}